\theoremstyle{plain}
\newtheorem{thm}{Theorem}[section]
\newtheorem{lem}[thm]{Lemma}
\newtheorem{prop}[thm]{Proposition}
\newtheorem{cor}[thm]{Corollary}
\theoremstyle{definition}
\newtheorem{defn}[thm]{Definition}
\newtheorem{exmp}[thm]{Example}
\newtheorem{rmk}[thm]{Remark}
\newcommand{\matr}[1]{\bm{#1}}
\newcommand{\norm}[1]{\left\lVert#1\right\rVert}
\def\dim{\operatorname{dim}}
\def\dimA{\dim(\tensor{A})}
\def\poly{\operatorname{poly}}
\def\vec{\operatorname{Vec}}
\def\unfold{\operatorname{Unfold}}
\def\tensor#1{\mathcal{#1}}
\def\entry#1{\llbracket #1 \rrbracket}
\def\bigEntry#1{\Big\llbracket #1 \Big\rrbracket}
\def\onek{\bm{1}_{[k]}}
\def\zerok{\bm{0}_{[k]}}
\title[]{Operator Norm Inequalities between Tensor Unfoldings on the Partition Lattice}
\author{Miaoyan Wang$^1$, Khanh Dao Duc$^1$, Jonathan Fischer$^2$, and Yun S. Song$^{1,2,3}$}
\address{$^1$Department of Mathematics, University of Pennsylvania\\$^2$Department of Statistics, University of California, Berkeley\\
$^3$Computer Science Division, University of California, Berkeley }
\begin{document}

\keywords{higher-order tensors; general unfoldings, partition lattice, operator norm, orthogonality}
\subjclass[2010]{15A60; 15A69; 06B99; 05A18}

\begin{abstract}
Interest in higher-order tensors has recently surged in data-intensive fields, with a wide range of applications including image processing, blind source separation, community detection, and feature extraction.  A common paradigm in tensor-related algorithms advocates unfolding (or flattening) the tensor into a matrix and applying classical methods developed for matrices.  Despite the popularity of such techniques, how the functional properties of a tensor changes upon unfolding is currently not well understood.  
In contrast to the body of existing work which has focused almost exclusively on matricizations, we here consider all possible unfoldings of an order-$k$  tensor, which are in one-to-one correspondence with the set of partitions of $\{1,\ldots,k\}$.
We derive general inequalities between the $l^p$-norms of arbitrary unfoldings defined on the partition lattice.
In particular, we demonstrate how the spectral norm ($p=2$) of a tensor is bounded by that of its unfoldings, and obtain an improved upper bound on the ratio of the Frobenius norm to the spectral norm of an arbitrary tensor.  
For specially-structured tensors satisfying a generalized definition of orthogonal decomposability, we prove that the spectral norm remains invariant under specific subsets of unfolding operations.

\end{abstract}

\maketitle

\section{Introduction}

Tensors of order 3 or greater, known as higher-order tensors, have recently attracted increased attention in many fields across science and engineering. Methods built on tensors provide powerful tools to capture complex structures in data that lower-order methods may fail to exploit. Among numerous examples, tensors have been used to detect patterns in time-course data~\cite{zhao2005tricluster, sun2006beyond, omberg2007tensor, hoff2015multilinear} and to model higher-order cumulants~\cite{anandkumar2014tensor, mccullagh1984tensor, cardoso1990eigen}. However, tensor-based methods are fraught with challenges. Tensors are not simply matrices with more indices; rather, they are mathematical objects possessing multilinear algebraic properties. Indeed, extending familiar matrix concepts such as norms to tensors is non-trivial~\cite{lim2005singular, qi2005eigenvalues}, and computing these quantities has proven to be NP-hard~\cite{hillar2013most, friedland2014computational, friedland2016computational}. 

The spectral relations between a general tensor and its lower-order counterparts have yet to be studied.  There are generally two types of approaches underlying many existing tensor-based methods. The first approach flattens the tensor into a matrix and applies matrix-based techniques in downstream analyses, notably higher-order SVD \cite{kroonenberg1983three, de2000multilinear} and TensorFace~\cite{vasilescu2002multilinear}. Flattening is computationally convenient because of the ubiquity of well-established matrix-based methods, as well as the connection between tensor contraction and block matrix multiplication~\cite{ragnarsson2012block}. However, matricization leads to a potential loss of the structure found in the original tensor. This motivates the key question of how much information a flattening retains from its parent tensor. 

The second approach either handles the tensor directly or unfolds it into objects of order-3 or higher. Recent work on bounding the spectral norm of sub-Gaussian tensors reveals that solving for the convex relaxation of tensor rank by unfolding is suboptimal~\cite{tomioka2014spectral}. Interestingly, in the context of tensor completion, unfolding a higher-order tensor into a nearly cubic tensor requires smaller sample sizes than matricization~\cite{yuan2014tensor}. These results are probabilistic in nature and merely focus on a particular class of tensors. Assessing the general impact of unfolding operations on an arbitrary tensor and the role of the tensor's intrinsic structure remains challenging. 

The primary goal of this paper is to study the effect of unfolding operations on functional properties of tensors, where an unfolding is any lower-order representation of a tensor. We study the operator norm of a tensor viewed as a multilinear functional because this quantity is commonly used in both theory and applications, especially in tensor completion \cite{yuan2014tensor,mu2013square} and low-rank approximation problems \cite{tomioka2010estimation, yu2014approximate}. Given an order-$k$ tensor, we represent each possible unfolding operation using a partition $\pi$ of $[k]=\{1,\dots, k\}$, where 
 a block in $\pi$ corresponds to the set of modes that should be combined into a single mode.
Each unfolding is a rearrangement of the elements of the original tensor into a tensor of lower order.
Here we study the $l^p$ operator norms of all possible tensor unfoldings, which together define what we coin a ``norm landscape'' on the partition lattice.   A partial order relation between partitions enables us to find a path between an arbitrary pair of unfoldings and establish our main inequalities relating their operator norms. 
For specially-structured tensors satisfying a generalized definition of orthogonal decomposability, we show that the spectral norm ($p=2$) remains invariant under unfolding operations corresponding to a specific subset of partitions.  
To our knowledge, our results represent the first attempt to provide a full picture of the norm landscape over all possible tensor unfoldings.

The remainder of this paper is organized as follows.  In Section~\ref{sec:notation}, we introduce some notation, and relate the spectral norm and the general $l^p$-norm of a tensor.  We then describe in Section~\ref{sec:unfolding} general tensor unfoldings defined on the partition lattice.  In Section~\ref{sec:main_results}, we present our main results on the inequalities between the operator norms of any two tensor unfoldings and describe how the norm landscape changes over the partition lattice.  In Section~\ref{sec:OD}, we generalize the notion of orthogonal decomposable tensors and prove that the spectral norm is invariant within a specific set of tensor unfoldings.  We conclude in Section~\ref{sec:discussion} with discussions about our findings and avenues of future  work.

\section{Higher-order tensors and their operator norms}
\label{sec:notation}

An order-$k$ tensor $\tensor{A} = \entry{a_{i_1\dots\, i_k}}  \in \mathbb{F}^{d_1 \times \dots \times d_k}$ over a field $\mathbb{F}$ is a hypermatrix with dimensions $(d_1,\ldots,d_k)$ and entries $a_{i_1\dots i_k}\in\mathbb{F}$, for $1 \leq i_n \leq d_n$, $n=1,\ldots,k$.
In this paper, we focus on real tensors, $\mathbb{F}=\mathbb{R}$. 
The total dimension of $\tensor{A}$ is denoted by $\dimA =\prod_{n=1}^{k} d_n$. 

The vectorization of $\tensor{A}$, denoted $\vec(\tensor{A})$, is defined as the operation rearranging all elements of $\tensor{A}$ into a column vector. 
For ease of notation, we use the shorthand $[n]$ to denote the $n$-set $\{1,\dots, n\}$ for $n\in\mathbb{N}_+$, and sometimes write $\otimes^k_{n=1} \matr{x}_n=\matr{x}_{1}\otimes\cdots \otimes\matr{x}_{k}$ when space is limited. 
We use  $S^{d-1}=\{\matr{x}\in\mathbb{R}^d\colon \norm{\matr{x}}_2=1\}$ to denote the $(d-1)$-dimensional unit sphere, and $\matr{I}_d$ to denote the $d\times d$ identity matrix.

For any two tensors $\tensor{A}=\entry{a_{i_1\dots\, i_k}},$ $\tensor{B}=\entry{b_{i_1\dots\, i_k}} \in \mathbb{R}^{d_1 \times \cdots \times d_k}$ of identical order and dimensions, their inner product is defined as
\[
\langle \tensor{A},\ \tensor{B} \rangle = \sum_{i_1,\dots,i_k} a_{i_1\dots i_k} b_{i_1\dots i_k}, 
\]
while the tensor Frobenius norm of $\tensor{A}$ is defined as
\[
\| \tensor{A} \|_F = \sqrt{\langle \tensor{A},\ \tensor{A} \rangle} = \sqrt{\sum_{i_1, \dots, i_k} |a_{i_1 \dots i_k}|^2}, 
\]
both of which are analogues of standard definitions for vectors and matrices. 

Following \cite{lim2005singular}, we define the \emph{covariant multilinear matrix multiplication} of a tensor $\tensor{A}\in\mathbb{R}^{d_1\times\cdots\times d_k}$ by matrices $\matr{M}_1 = (m^{(1)}_{i_1j_1})\in \mathbb{R}^{d_1 \times s_1},\dots, \matr{M}_k=(m^{(k)}_{i_kj_k}) \in \mathbb{R}^{d_k \times s_k}$ as
\begin{equation} \label{eq:deffunctionalmat}
 \tensor{A}(\matr{M}_1, \dots, \matr{M}_k) = \bigEntry{\sum_{i_1=1}^{d_1}\cdots \sum_{i_k = 1}^{d_k} a_{i_1\dots i_k} m^{(1)}_{i_1j_1}\cdots  m^{(k)}_{i_kj_k}},\\
\end{equation}
which results in an order-$k$ tensor in $\mathbb{R}^{s_1\times\cdots\times s_k}$. This operation multiplies the $n^{th}$ mode of $\tensor{A}$ by the matrix $\matr{M}_n$ for all $n \in [k]$. Just as a matrix may be multiplied in up to two modes by matrices of consistent dimensions, an order-$k$ tensor can be multiplied by up to $k$ matrices in $k$ modes. In the case of $k=2$, $\tensor{A}$ is a matrix and $\tensor{A}(\matr{M}_1, \matr{M}_2) = \matr{M}_1^T \tensor{A} \matr{M}_2$. Sometimes we are interested in multiplying by vectors rather than matrices, in which case we obtain the $k$-multilinear functional $\tensor{A}\colon \mathbb{R}^{d_1} \times\cdots \times \mathbb{R}^{d_k} \to \mathbb{R}$ given by
\begin{align}\label{eq:deffunctional}
\tensor{A}(\matr{x}_1, \dots, \matr{x}_k)& = \sum_{i_1=1}^{d_1} \cdots \sum_{i_k = 1}^{d_k} a_{i_1\dots i_k} x^{(1)}_{i_1}\cdots  x^{(k)}_{i_k} \notag \\
&=\langle \tensor{A},\ \matr{x}_1\otimes \cdots \otimes \matr{x}_k \rangle,
\end{align}
where $\matr{x}_n=(x^{(n)}_1,\dots,x^{(n)}_{d_n})^T\in\mathbb{R}^{d_n}$, $n \in [k]$. Note that multiplying by a vector in $r$ modes in the manner defined in~\eqref{eq:deffunctional} reduces the order of the output tensor to $k-r$, whereas multiplying by matrices leaves the order unchanged.  Although the coordinate representation of a tensor as a hypermatrix provides a concrete description, viewing it instead as a multilinear functional provides a coordinate-free, basis-independent perspective which allows us to better characterize the spectral relations among different tensor unfoldings.

We define the \emph{operator norm}, or \emph{induced norm}, of a tensor $\tensor{A}$ using the associated $k$-multilinear functional~\eqref{eq:deffunctional}.

\begin{defn} [Lim~\cite{lim2005singular}]\label{def:norm}
Let $\tensor{A}\in\mathbb{R}^{d_1\times\cdots\times d_k}$ be an order-$k$ tensor. For any $1\leq p\leq \infty$, the $l^p$-norm of the multilinear functional associated with $\tensor{A}$ is defined as
\begin{align}\label{eq:norm}
\norm{\tensor{A}}_p &=\sup \left \{ {\tensor{A}(\matr{x}_1, \dots, \matr{x}_k) \over \norm{\matr{x}_1}_p \cdots \norm{\matr{x}_k}_p}\colon \matr{x}_n\neq 0, \matr{x}_n\in\mathbb{R}^{d_n}, n\in [k]   \right\} \notag \\
&= \sup \left\{ \tensor{A}(\matr{x}_1, \dots, \matr{x}_k)\colon \norm{\matr{x}_n}_p = 1, \matr{x}_n\in\mathbb{R}^{d_n}, n\in [k] \right\},
\end{align}
where $\norm{ \matr{x}_n }_p$ denotes the vector $l^p$-norm of $\matr{x}_n$. 
\end{defn}

\begin{rmk}
The special case of $p=2$ is called the \emph{spectral norm}, frequently denoted $\norm{ \tensor{A} }_\sigma$. By \eqref{eq:norm}, $\norm{\tensor{A}}_\sigma$ is the maximum value obtained as the inner product of the tensor $\tensor{A}$ with a rank-1 tensor, $\matr{x}_1\otimes \cdots \otimes \matr{x}_n$, of Frobenius norm 1 and of the same dimensions. This point of view provides an equivalent definition of $\norm{\tensor{A}}_\sigma$ as determining the best rank-1 tensor approximation to $\tensor{A}$, and we note that the rank-1 constraint becomes weaker as more unfolding is applied. See Section~\ref{sec:main_results} for further details. 
\end{rmk}

\medskip
Because we restrict all entries of $\tensor{A}$ and $\{ \matr{x}_n \}_{n=1}^k$ to be real, we need not take the absolute value of $\tensor{A}(\matr{x}_1, \dots, \matr{x}_k)$ as in \cite{lim2005singular}.  It is worth mentioning that the notion of tensor $l^p$-norms defined by~\eqref{eq:norm} are \emph{not} extensions of the classical matrix $l^p$-norms when $p\neq 2$. To see this, recall that for an $m \times n$ matrix $\matr{A}$, one usually defines the $l^p$ operator norm as 
\begin{equation}\label{eq:oldnorm}
\norm{\matr{A}}_p=\sup\left\{ {\norm{\matr{Ax}}_p \over \norm{\matr{x}}_p} \colon \matr{x}\neq \matr{0},\matr{x}\in\mathbb{R}^n  \right\}.
\end{equation}
In general~\eqref{eq:norm} and~\eqref{eq:oldnorm} are not equal even for matrices, illustrated by the following example:

\medskip
\begin{exmp} 
Let $\matr{A}=\entry{a_{ij}}$ be the $2 \times 2$ matrix
$
\matr{A}=
\begin{bmatrix}
1 &1 \\ 0 &4
\end{bmatrix}$
and consider $p=1$. Solving~\eqref{eq:oldnorm}, we have
\[
\norm{\matr{A}}_{1}=\sup \left\{ {\norm{\matr{Ax}}_{1}\over {\norm{\matr{x}}_{1}}} \colon \matr{x}\neq \matr{0}, \matr{x}\in\mathbb{R}^2  \right\}=\sup_{j} \sum_{i} |a_{ij}|=5.
\]
However, instead using~\eqref{eq:norm} gives
\begin{align}
\displaystyle \norm{\matr{A}}_1&=\sup \left\{ {\matr{x}_1^T\matr{A}\matr{x}_2\over \norm{\matr{x}_1}_1 \norm{\matr{x}_2}_1}\colon \matr{x}_n\in\mathbb{R}^2, \matr{x}_n\neq \matr{0},  n \in [2] \right\}\\
&=\sup\left\{ \matr{x}_1^T\matr{A}\matr{x}_2\colon \matr{x}_n\in\mathbb{R}^2, \norm{\matr{x}_n}_1=1,  n\in[2] \right\}\\
&=4,
\end{align}
which is neither the classical matrix $l^1$-norm (equal to 5), nor the entry-wise $l^1$-norm (equal to $\sum_{i,j}|a_{ij}|=6$).
\end{exmp}

\medskip
Throughout this paper, we adopt Definition~\ref{def:norm} and always use $\norm{ \ \cdot \ }_p$ to denote the $l^p$-norm defined therein, even for matrices. In fact,~\eqref{eq:oldnorm} defines an operator norm by viewing the matrix as a linear operator from $\mathbb{R}^{d_2}$ to  $\mathbb{R}^{d_1}$, whereas~\eqref{eq:norm} defines an operator norm in which the matrix defines a bilinear functional from $\mathbb{R}^{d_1} \times \mathbb{R}^{d_2}$ to $\mathbb{R}$. These two definitions are equivalent when $p=2$, but otherwise represent two different operators and result in two distinct operator norms. To be consistent with our treatment of tensors as $k$-multilinear functionals, we formulate matrices as bilinear functionals. 

For a given tensor, its $l^p$-norm and $l^q$-norm mutually control each other, and the comparison bound is polynomial in the total dimension of the tensor, $\dimA= \prod_{n=1}^{k} d_n$.\\

\begin{prop}[$l^p$-norm vs. $l^q$-norm] \label{thm:pnorm}
Let $\tensor{A}\in\mathbb{R}^{d_1\times \cdots \times d_k}$ be an order-$k$ tensor. Suppose $\norm{\cdot}_p$ and $\norm{\cdot}_q$ are two norms defined in~\eqref{eq:norm} with $q\geq p\geq 1$. Then, 
\[
\norm{\tensor{A}}_p \leq \norm{\tensor{A}}_q \leq  \dimA^{\frac{1}{p}-\frac{1}{q}}\norm{\tensor{A}}_p.
\]
\end{prop}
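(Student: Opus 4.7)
The proof will rest on applying the standard vector $l^p$--$l^q$ norm inequality mode by mode inside the definition \eqref{eq:norm}. Recall that for any $\matr{x}\in\mathbb{R}^d$ and $q\geq p\geq 1$, one has
\[
\norm{\matr{x}}_q \leq \norm{\matr{x}}_p \leq d^{\frac{1}{p}-\frac{1}{q}}\norm{\matr{x}}_q,
\]
which follows from Hölder's inequality (or Jensen applied to $t\mapsto t^{q/p}$). I will use this on each of the factor spaces $\mathbb{R}^{d_n}$.

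For the left inequality $\norm{\tensor{A}}_p \leq \norm{\tensor{A}}_q$, the plan is to compare the two Rayleigh-type quotients at the same test vectors. Fix nonzero $\matr{x}_n \in \mathbb{R}^{d_n}$; after replacing some $\matr{x}_n$ by $-\matr{x}_n$ if necessary (which does not change any $l^p$-norm), I may assume $\tensor{A}(\matr{x}_1,\dots,\matr{x}_k)\geq 0$, and this does not reduce the supremum in \eqref{eq:norm}. Since $\norm{\matr{x}_n}_p \geq \norm{\matr{x}_n}_q$ for each $n$, the denominator $\prod_n \norm{\matr{x}_n}_p$ is at least $\prod_n \norm{\matr{x}_n}_q$, so
\[
\frac{\tensor{A}(\matr{x}_1,\dots,\matr{x}_k)}{\norm{\matr{x}_1}_p \cdots \norm{\matr{x}_k}_p} \leq \frac{\tensor{A}(\matr{x}_1,\dots,\matr{x}_k)}{\norm{\matr{x}_1}_q \cdots \norm{\matr{x}_k}_q} \leq \norm{\tensor{A}}_q.
\]
Taking the supremum over the test vectors yields $\norm{\tensor{A}}_p \leq \norm{\tensor{A}}_q$.

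For the right inequality, I use the reverse vector bound $\norm{\matr{x}_n}_p \leq d_n^{1/p - 1/q}\norm{\matr{x}_n}_q$ in each mode. With the same sign-normalization ensuring $\tensor{A}(\matr{x}_1,\dots,\matr{x}_k)\geq 0$ at the points approaching the supremum, I write
\[
\frac{\tensor{A}(\matr{x}_1,\dots,\matr{x}_k)}{\norm{\matr{x}_1}_q \cdots \norm{\matr{x}_k}_q} = \frac{\tensor{A}(\matr{x}_1,\dots,\matr{x}_k)}{\norm{\matr{x}_1}_p \cdots \norm{\matr{x}_k}_p}\prod_{n=1}^k \frac{\norm{\matr{x}_n}_p}{\norm{\matr{x}_n}_q} \leq \left(\prod_{n=1}^k d_n\right)^{\frac{1}{p}-\frac{1}{q}} \norm{\tensor{A}}_p = \dimA^{\frac{1}{p}-\frac{1}{q}}\norm{\tensor{A}}_p,
\]
and taking the supremum over nonzero $\matr{x}_n$ delivers the claimed bound.

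There is no genuine obstacle here: the proof is essentially a one-line computation per inequality once the classical vector $l^p$--$l^q$ comparison is applied separately in each mode, and the total-dimension factor $\dimA^{1/p-1/q}$ appears simply because the per-mode dimensional factors $d_n^{1/p-1/q}$ multiply across $n\in[k]$. The only point requiring minor care is the absence of absolute values in \eqref{eq:norm}, which I handle by the sign-flip argument above; this is legitimate because the vector norms in the denominator are invariant under $\matr{x}_n\mapsto -\matr{x}_n$ while the multilinear numerator changes sign.
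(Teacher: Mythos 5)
Your proof is correct and follows essentially the same route as the paper: rewrite the $l^q$ Rayleigh quotient as the $l^p$ quotient times the product $\prod_n \norm{\matr{x}_n}_p/\norm{\matr{x}_n}_q$, then bound that product between $1$ and $\left(\prod_n d_n\right)^{\frac{1}{p}-\frac{1}{q}}$ using the classical per-mode vector norm comparison. The sign-flip normalization you add is a harmless extra precaution that the paper leaves implicit.
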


\begin{proof}
Starting from Definition~\ref{def:norm}, we have 
\begin{align}\label{eq:pnorm2norm}
\norm{\tensor{A}}_q& =\sup\left\{ {\tensor{A}(\matr{x}_1, \dots, \matr{x}_k) \over \norm{\matr{x}_1}_q \cdots \norm{\matr{x}_k}_q}\colon \matr{x}_n\neq \matr{0}, \matr{x}_n\in\mathbb{R}^{d_n}, n\in[k] \right\} \notag \\
&=\sup\left\{ {\tensor{A}(\matr{x}_1, \dots, \matr{x}_k) \over \norm{\matr{x}_1}_p \cdots \norm{\matr{x}_k}_p} \times
{\norm{\matr{x}_1}_p \cdots \norm{\matr{x}_k}_p \over \norm{\matr{x}_1}_q \cdots \norm{\matr{x}_k}_q}\colon \matr{x}_n\neq \matr{0}, \matr{x}_n\in\mathbb{R}^{d_n}, n\in[k] \right\}.
\end{align}
For any $q\geq p\geq 1$, the equivalence of vector norms tells us 
\begin{equation} \label{eq:vectornorm}
\norm{\matr{x}}_q \leq \norm{\matr{x}}_p \leq {d}^{\frac{1}{p}-\frac{1}{q}} \norm{\matr{x}}_q, \quad\text{for all }\matr{x}\in \mathbb{R}^{d}.
\end{equation}
Applying~\eqref{eq:vectornorm} to $\matr{x}_n$, for $n \in [k]$, gives
\begin{equation}\label{eq:kvectornorm}
1 \leq \prod_{n=1}^k  {\norm{\matr{x}_n}_p \over \norm{\matr{x}_n}_q } \leq \left(\prod_{n=1}^k d_n\right)^{\frac{1}{p}-\frac{1}{q}}.
\end{equation}
Inserting~\eqref{eq:kvectornorm} into~\eqref{eq:pnorm2norm} and noting $\dimA = \prod_{n=1}^k d_n$, we find
\[
\norm{\tensor{A}}_p \leq \norm{\tensor{A}}_q \leq  \dimA^{\frac{1}{p}-\frac{1}{q}}\norm{\tensor{A}}_p,
\]
which completes the proof.
\end{proof}

\section{Partitions and general tensor unfoldings}
\label{sec:unfolding}

Any higher-order tensor can be transformed into different lower-order tensors by modifying its indices in various ways. The most common transformations are \emph{$n$-mode flattenings}, or \emph{matricizations}, which rearrange the elements of an order-$k$ tensor into a $d_n \times \prod_{i\neq n} d_i$ matrix. For example, the $n$-mode matricization of a tensor $\tensor{A}\in \mathbb{R}^{d_1\times \cdots \times d_k}$ is obtained by mapping the fixed tensor index $(i_1,\dots, i_k)$ to the matrix index $(i_n, m)$, where
\begin{equation}\label{eq:modeflattening}
m=1+\prod_{a \in [k],  a \neq n} (i_a-1)J_a, \quad \text{where\ } J_a = \prod_{l \in [a-1], l\neq n} d_l.
\end{equation}
Recently there has been much interest in studying the relationship between tensors and their matrix flattenings \cite{hu2015relations}. We present here a more general analysis by considering all possible lower-order tensor unfoldings rather than just matricizations. Using the blocks of a partition of $[k]$ to specify which modes are combined into a single mode of the new tensor, we establish a one-to-one correspondence between the set of all partitions of $[k]$ and the set of lower-order tensor unfoldings. The \emph{partition lattice} then describes the underlying relationship between possible tensor unfoldings of a tensor $\tensor{A}$.

For any $k\in\mathbb{N}_{+}$, a partition $\pi$ of $[k]$ is a collection $\{ B_1^\pi, B_2^\pi, \dots, B_\ell^\pi\}$ of disjoint, nonempty subsets (or blocks) $B_i^\pi$ satisfying $\displaystyle \cup_{i=1}^\ell B_i^\pi =[k]$. The set of all partitions of $[k]$ is denoted $\mathcal{P}_{[k]}$.  We use $|\pi|$ to denote the number of blocks in $\pi$, and $|B_i^\pi|$ to denote the number of elements in $B_i^\pi$.  We say that a partition $\pi$ is a level-$\ell$ partition if $|\pi|=\ell$.   The set of all level-$\ell$ partitions of $[k]$ is denoted by $\mathcal{P}_{[k]}^\ell$, which is a set of $S(k,\ell)$ elements, where $S(k, \ell)$ is the Stirling number of the second kind. 

The following partial order naturally relates partitions satisfying a basic compatibility constraint 
and the resulting structure plays a key role in our work.

\begin{defn}[Partition Lattice]
\label{def:lattice}
A partition $\pi_1\in\mathcal{P}_{[k]}$ is called a \emph{refinement} of $\pi_2\in\mathcal{P}_{[k]}$ if each block of $\pi_1$ is a subset of some block of $\pi_2$; conversely, $\pi_2$ is said to be a \emph{coarsening} of $\pi_1$. This relationship defines a partial order, expressed as $\pi_1\leq \pi_2$, and we say that $\pi_1$ is \emph{finer} than $\pi_2$ while $\pi_2$ is \emph{coarser} than $\pi_1$. If either $\pi_1 \leq \pi_2$ or $\pi_2 \leq \pi_1$, then $\pi_1$ and $\pi_2$ are \emph{comparable}.  
According to this partial order, the least element of $\mathcal{P}_{[k]}$ is $\zerok := \{ \{1\},\ldots,\{k\}\}$, while the greatest element is $\onek := \{\{1,\ldots,k\}\}$.
Equipped with this notion, $\mathcal{P}_{[k]}$ generates a partition lattice by connecting any two comparable partitions that differ by exactly one level.  An example is illustrated in Figure~\ref{fig:partition4}. Henceforth, $\mathcal{P}_{[k]}$ may represent either the set of all partitions of $[k]$ or the partition lattice it generates depending on context.
\end{defn}

\medskip
It is clear that many partitions are not comparable, including every pair of distinct partitions at the same level. To consider arbitrary partitions in tandem, we require an extension of this partial order. In general, for any two partitions $\pi_1, \pi_2\in\mathcal{P}_{[k]}$, we define their \emph{greatest lower bound} $\pi_1 \wedge \pi_2$ as
\[
\pi_1 \wedge \pi_2 :=\sup\{\pi\in\mathcal{P}_{[k]} \colon \pi \leq \pi_1, \pi \leq \pi_2\}.
\]
 More concretely, $\pi_1 \wedge \pi_2$ consists of the collection of all nonempty intersections of blocks in $\pi_1$ and $\pi_2$ and is unique for a given pair $(\pi_1, \pi_2)$.

\begin{figure}
\centering
\includegraphics[width=\textwidth]{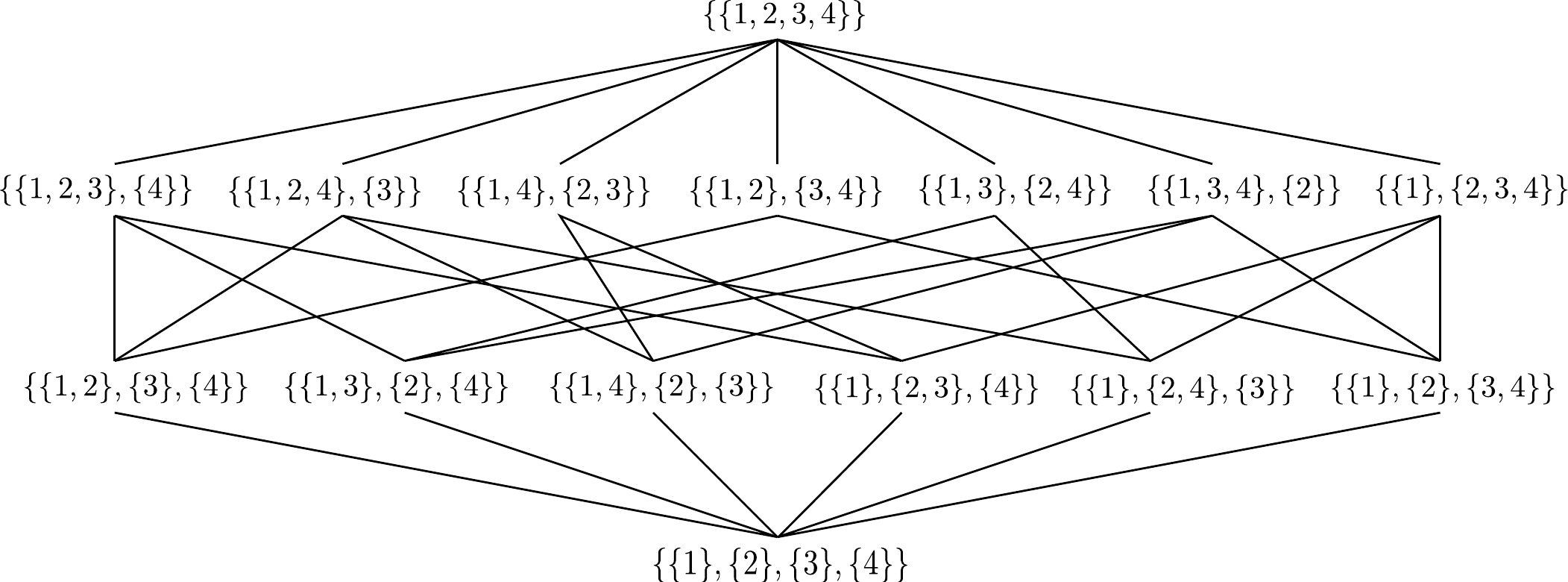}
\caption{The partition lattice $\mathcal{P}_{[4]}$.}
\label{fig:partition4}
\end{figure}

\begin{exmp}
Figure~\ref{fig:partition4} illustrates $\mathcal{P}_{[4]}$ in lattice form. Recall that an edge connects two partitions if and only if they are comparable and their levels differ by exactly one. Partitions are comparable if and only if there exists a non-reversing path between them. To clarify the components of Definition~\ref{def:lattice}, take $\pi_1=\{\{1,4\},\{2,3\}\}$ and $\pi_2=\{\{1,3,4\},\{2\}\}$. Then $\pi_1\wedge \pi_2=\{\{1,4\},\{2\},\{3\}\}$, $\bm{0}_{[4]}=\{\{1\},\{2\},\{3\},\{4\}\}$, and $\bm{1}_{[4]}=\{\{1,2,3,4\}\}$.
\end{exmp}

\medskip
The following definition generalizes the concept of $n$-mode flattenings defined in~\eqref{eq:modeflattening} to general unfoldings induced by arbitrary partitions $\pi\in\mathcal{P}_{[k]}$. 

\begin{defn}[General Tensor Unfolding] \label{def:unfold}
Let $\tensor{A}\in \mathbb{R}^{d_1\times \cdots\times d_k}$ be an order-$k$ tensor and $\pi=\{ B_1^\pi,\dots,B_\ell^\pi\}\in\mathcal{P}_{[k]}$. The partition $\pi$ defines a mapping $\phi_\pi\colon [d_1]\times \cdots \times [d_k] \to [\prod_{j\in B_1^\pi} d_j]\times \cdots \times [\prod_{j\in B_\ell^\pi} d_j]$ such that
\begin{equation}\label{def:generalunfolding}
\phi_\pi(i_1,\dots,i_k) = (m_1,\dots,m_\ell), 
\end{equation}
where
\[
m_j = 1+\prod_{r\in B_j^\pi}(i_r-1)J_r, \quad \text{where } J_r=\prod_{l\in B_j^\pi,  l < r} d_l, \quad \text{for all } j\in [\ell].
\]
Clearly, $\phi_{\pi}$ is a one-to-one mapping, so its inverse $\phi^{-1}_\pi$ is well defined. Thus $\phi_{\pi}$ induces an unfolding action $\tensor{A}\mapsto \unfold_{\pi}(\tensor{A})$ such that
\begin{equation}\label{eq:unfold}
\left(\unfold_{\pi}(\tensor{A})\right)_{(m_1,\dots, m_\ell)} = (\tensor{A})_{\phi_{\pi}^{-1}(m_1,\dots, m_\ell)},
\end{equation}
for all $(m_1,\dots, m_\ell)\in [\prod_{j\in B_1^\pi} d_j]\times \cdots \times [\prod_{j\in B_\ell^\pi} d_j],$
or, equivalently, 
\[
\left(\unfold_{\pi}(\tensor{A})\right)_{\phi_\pi (i_1,\dots, i_k)} = (\tensor{A})_{(i_1,\dots, i_k)},
\]
for all $(i_1,\dots, i_k)\in [d_1]\times \cdots \times [d_k].$ Thus $\unfold_{\pi}(\tensor{A})$ is an order-$\ell$ tensor of dimensions $(\prod_{i \in B_1^\pi } d_i, \dots,$ $\prod_{i \in B_\ell^\pi} d_i)$, and we call it the \emph{tensor unfolding of $\tensor{A}$ induced by $\pi$.} \\
\end{defn}

\begin{rmk}
By the definition of $\phi_{\pi}$ in~\eqref{def:generalunfolding}, $\unfold_{\zerok}(\tensor{A})=\tensor{A}$ and $\unfold_{\onek}(\tensor{A})=\vec(\tensor{A})$.
\end{rmk}

\begin{exmp}
Consider an order-4 tensor $\tensor{A}=\entry{a_{ijkl}}\in \mathbb{R}^{2\times 2 \times 2 \times 2 }$. We provide a subset of the possible tensor unfoldings to elucidate both the manner in which the operation works and the natural association with partitions.

\begin{itemize}
\item For $\pi = \{\{1,2\}, \{ 3 \}, \{4\}\}$, $\unfold_{\pi}(\tensor{A})$ is an order-3 tensor of dimensions $(4,2,2)$ with entries given by
\[ (\unfold_{\pi}(\tensor{A}))_{1kl} = a_{11kl}, \quad (\unfold_{\pi}(\tensor{A}))_{2kl} = a_{12kl}, \] \[ (\unfold_{\pi}(\tensor{A}))_{3kl} = a_{21kl}, \quad (\unfold_{\pi}(\tensor{A}))_{4kl} = a_{22kl}, \]
for all $(k,l) \in [2]\times[2]$.

\item For $\pi = \{ \{1,2\},\{3,4\}\}$, $\unfold_{\pi}(\tensor{A})$ is a $4\times4$ matrix
\[
\unfold_{\pi}(\tensor{A}) = \left( \begin{array}{cccc} 
a_{1111} & a_{1112} & a_{1121} & a_{1122}\\ 
a_{1211} & a_{1212} & a_{1221} & a_{1222}\\
a_{2111} & a_{2112} & a_{2121} & a_{2122} \\ 
a_{2211} & a_{2212} & a_{2221} & a_{2222}
\end{array}\right).
\]

\item For $\pi = \{ \{1,2,3\}, \{4\}\}$, $\unfold_{\pi}(\tensor{A})$ is a $8\times2$ matrix
\[
\unfold_{\pi}(\tensor{A}) = \left( 
\begin{array}{cccc} a_{1111} & a_{1112} \\ 
a_{1121} & a_{1122}\\ 
a_{1211} & a_{1212}\\ 
a_{1221} & a_{1222}\\ 
a_{2111} & a_{2112} \\ 
a_{2121} & a_{2122}\\ 
a_{2211} & a_{2212}\\ 
a_{2221} & a_{2222}
\end{array}\right). 
\]
\end{itemize}
\end{exmp}

\begin{rmk}
There are different conventions to order the elements within each transformed mode. In principle, the ordering of elements within each transformed mode is irrelevant, so we do not explicitly spell out their orderings hereafter. 
\end{rmk}

\begin{rmk} \label{rmk:Fnorm}
The unfolding operation leaves the Frobenius norm unchanged; that is, $\norm{\tensor{A}}_F=\norm{\unfold_{\pi}(\tensor{A})}_F$ for all $\pi\in\mathcal{P}_{[k]}$. More generally, the inner product remains invariant under all unfoldings: $\langle \tensor{A},\ \tensor{B}\rangle=\langle \unfold_\pi(\tensor{A}),\ \unfold_\pi(\tensor{B})\rangle$ for all $\pi\in\mathcal{P}_{[k]}$, where $\tensor{A}, \tensor{B}\in \mathbb{R}^{d_1\times \cdots\times d_k}$ are two order-$k$ tensors of the same dimensions. 
\end{rmk}

\section{Operator norm inequalities on the partition lattice}
\label{sec:main_results}

In this section, we compare the operator norms of different unfoldings of a tensor, in particular relative to that of the original tensor.  
We first focus on the spectral norm $(p=2)$ and then discuss extensions to general $l^p$-norms.

Recall that for an order-$k$ tensor $\tensor{A}\in\mathbb{R}^{d_1\times \cdots \times d_k}$, its spectral norm is defined as
\begin{equation}\label{def:spectral}
\norm{\tensor{A}}_\sigma =\sup \left\{ \tensor{A}(\matr{x}_1, \dots, \matr{x}_k)\colon \norm{\matr{x}_n}_2 = 1, \matr{x}_n\in\mathbb{R}^{d_n}, n\in [k] \right\}.
\end{equation}
The maximization of the polynomial form $\tensor{A}(\matr{x}_1, \dots, \matr{x}_k)$ on the unit sphere is closely related to the best, in the least-square sense, rank-1 tensor approximation. Specifically, for an order-$k$ tensor $\tensor{A}\in\mathbb{R}^{d_1\times \cdots \times d_k}$, the problem of determining the spectral norm is equivalent to finding a scalar $\lambda$ and a rank-1 norm-1 tensor $\matr{x}_1\otimes \cdots \otimes \matr{x}_k$ that minimize the function
\[
f(\matr{x}_1, \ldots,\matr{x}_k)=\norm{\tensor{A}-\lambda \matr{x}_1\otimes\cdots\otimes \matr{x}_k}_F.
\]
The corresponding value of $\lambda$ is equal to $\norm{\tensor{A}}_\sigma$. 
Since $\matr{x}_1\otimes\cdots\otimes \matr{x}_k$ must have the same order and dimensions as $\tensor{A}$, the rank-1 condition becomes less strict the more unfolded the tensor. In particular, for the vectorized unfolding, $\vec(\tensor{A})$, the best rank-1 tensor approximation is simply $\vec(\tensor{A})$ itself. For unfoldings into a matrix, $\text{Mat}(\tensor{A})$, the best rank-1 approximation is the outer product of the leading left and the right singular vectors of $\text{Mat}(\tensor{A})$. For higher-order unfoldings, the closed form of the best rank-1 approximation is not known in general. Nevertheless, the set of the rank-1 tensors over which the supremum is taken in \eqref{def:spectral} becomes more restricted. This observation implies that the spectral norm of a tensor unfolding decreases as the order of the unfolded tensor increases; that is, the spectral norm preserves the partial order on partitions.

\begin{prop}[Monotonicity]  \label{monotonicity} 
For all partitions $\pi_1, \pi_2\in\mathcal{P}_{[k]}$ satisfying $\pi_1 \leq \pi_2$, 
\begin{equation}
\norm{ {\unfold_{\pi_1}(\tensor{A})}}_\sigma \leq \norm{ {\unfold_{\pi_2}(\tensor{A})}}_\sigma.
\end{equation}
In particular, we have the global extrema
\begin{align}
\norm{\tensor{A}}_\sigma &=\norm{ {\unfold_{\zerok}(\tensor{A})}}_\sigma  =\min_{\pi\in \mathcal{P}_{[k]}} \norm{ \unfold_\pi(\tensor{A})}_\sigma,\\
\quad \norm{\tensor{A}}_F &=\norm{ {\unfold_{\onek}(\tensor{A})}}_\sigma=\max_{\pi\in \mathcal{P}_{[k]}} \norm{ \unfold_\pi(\tensor{A})}_\sigma.
\end{align}
\end{prop}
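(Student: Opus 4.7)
The strategy is to exploit the variational characterization of the spectral norm (\ref{def:spectral}) and show that, as the partition becomes coarser, the admissible set of rank-1 test tensors can only grow. Since taking a supremum over a larger set produces a larger value, monotonicity follows. By transitivity of the partial order, it suffices to verify the inequality when $\pi_2$ covers $\pi_1$, i.e.\ $\pi_2$ is obtained from $\pi_1$ by merging exactly two blocks; the general case then follows by concatenating covering steps along any saturated chain from $\pi_1$ to $\pi_2$ in $\mathcal{P}_{[k]}$.

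For a single covering step, suppose $\pi_1 = \{B_1,\ldots,B_{\ell+1}\}$ and $\pi_2$ is obtained by merging, say, $B_\ell$ and $B_{\ell+1}$ into one block $B = B_\ell \cup B_{\ell+1}$. Given unit vectors $\matr{z}_1,\ldots,\matr{z}_{\ell+1}$ in the factor spaces of $\unfold_{\pi_1}(\tensor{A})$, define
\[
\matr{y}_j = \matr{z}_j \quad (j<\ell), \qquad \matr{y}_\ell = \vec(\matr{z}_\ell \otimes \matr{z}_{\ell+1}).
\]
Since $\|\matr{z}_\ell \otimes \matr{z}_{\ell+1}\|_F = \|\matr{z}_\ell\|_2 \|\matr{z}_{\ell+1}\|_2 = 1$, each $\matr{y}_j$ is a unit vector of the correct dimension to serve as a test vector for $\unfold_{\pi_2}(\tensor{A})$. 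The key identity is that these two multilinear functionals agree on the paired arguments, namely
\[
\unfold_{\pi_1}(\tensor{A})(\matr{z}_1,\ldots,\matr{z}_{\ell+1}) = \unfold_{\pi_2}(\tensor{A})(\matr{y}_1,\ldots,\matr{y}_\ell),
\]
which follows from \eqref{eq:deffunctional} together with the invariance of the inner product under unfolding (Remark~\ref{rmk:Fnorm}): both sides equal $\langle \tensor{A},\ \matr{z}_1\otimes\cdots\otimes\matr{z}_{\ell+1}\rangle$ after reshaping, because the index map $\phi_{\pi_2}$ factors through $\phi_{\pi_1}$ on the paired block. Taking the supremum on the left over all unit $\matr{z}_j$ and noting that the right-hand side is bounded by the supremum over all admissible unit $\matr{y}_j$ (a larger family, since not every unit vector in the merged factor arises as a vectorized rank-1 tensor) yields $\|\unfold_{\pi_1}(\tensor{A})\|_\sigma \leq \|\unfold_{\pi_2}(\tensor{A})\|_\sigma$.

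The global extrema are then immediate. Since $\zerok \leq \pi \leq \onek$ for every $\pi \in \mathcal{P}_{[k]}$ and $\unfold_{\zerok}(\tensor{A}) = \tensor{A}$, the minimum is attained by $\tensor{A}$ itself. For the maximum, $\unfold_{\onek}(\tensor{A}) = \vec(\tensor{A})$ is an order-$1$ tensor, so its spectral norm is simply $\sup\{\langle \vec(\tensor{A}),\matr{x}\rangle : \|\matr{x}\|_2 = 1\} = \|\vec(\tensor{A})\|_2 = \|\tensor{A}\|_F$, where the last equality again uses Remark~\ref{rmk:Fnorm}.

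The main obstacle is the bookkeeping that verifies $\unfold_{\pi_1}(\tensor{A})(\matr{z}_1,\ldots,\matr{z}_{\ell+1}) = \unfold_{\pi_2}(\tensor{A})(\matr{y}_1,\ldots,\matr{y}_\ell)$ cleanly from Definition~\ref{def:unfold}. The cleanest formulation is to write both sides as $\langle \tensor{A},\ \tensor{X}\rangle$ for the same rank-1 order-$k$ tensor $\tensor{X} = \otimes_{n=1}^k \matr{w}_n$, where the factors $\matr{w}_n$ along the modes inside a block $B_j^{\pi_1}$ are read off from $\matr{z}_j$. This reduction to the order-$k$ inner product, combined with the fact that $\vec$ commutes with tensor products up to the ordering conventions already deemed irrelevant, avoids any index gymnastics with $\phi_\pi$.
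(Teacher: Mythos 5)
Your argument is correct and follows essentially the same route as the paper's proof: reduce to a single covering step, embed the vectorized rank-1 test tensors $\vec(\matr{z}_\ell\otimes\matr{z}_{\ell+1})$ into the unit sphere of the merged factor, and conclude by comparing suprema over the nested admissible sets. The verification of the global extrema (in particular that $\norm{\unfold_{\onek}(\tensor{A})}_\sigma=\norm{\vec(\tensor{A})}_2=\norm{\tensor{A}}_F$) is also sound and matches what the paper leaves implicit.
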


\begin{proof}

Suppose $\pi_1, \pi_2\in\mathcal{P}_{[k]}$, and $\pi_1$ is a one-step refinement of $\pi_2$. Without loss of generality, assume $\pi_2$ is obtained by merging two blocks $B_1, B_2\in \pi_1$ into a single block $B\in \pi_2$. Let $\unfold_{\pi_1}(\tensor{A})\in\mathbb{R}^{d_1\times \cdots \times d_\ell}$ denote the tensor unfolding induced by $\pi_1=\{B_1,\ldots,B_\ell\}$. Then, $\unfold_{\pi_2}(\tensor{A})$ is a $(d_1\cdot d_2, d_3,\ldots, d_\ell)$-dimensional tensor. By definition of the spectral norm,
\[
\begin{aligned}
\norm{\unfold_{\pi_1}(\tensor{A})}_\sigma&=\sup \big \{\left \langle\unfold_{\pi_1}(\tensor{A}), \  \matr{x}_{1}\otimes\cdots \otimes \matr{x}_{\ell}\right \rangle \colon \matr{x}_{n}\in S^{d_n-1}, n \in [\ell] \} \\
&= \sup \big \{\left  \langle \unfold_{\pi_2}(\tensor{A}), \  \vec(\matr{x}_{1}\otimes \matr{x}_2)\otimes \matr{x}_3\otimes \cdots \otimes \matr{x}_{\ell} \right \rangle \colon \matr{x}_{n}\in S^{d_n-1}, n \in [\ell] \}\\
&\leq \sup \big \{\left \langle\unfold_{\pi_2}(\tensor{A}), \  \matr{y}\otimes\matr{x}_3\cdots \otimes \matr{x}_\ell \right \rangle \colon \matr{y} \in S^{d_1\cdot d_2-1},  \matr{x}_{n}\in S^{d_n-1}, n=3,\ldots, \ell \} \\
&= \norm{\unfold_{\pi_2}(\tensor{A})}_\sigma,
\end{aligned}
\]
where the third line comes from the fact that the set $\{ \vec(\matr{x}_1 \otimes \matr{x}_1)\colon (\matr{x}_1,\matr{x}_2) \in S^{d_1-1}\times S^{d_2-1}\}$ is contained in the set $\left\{\matr{y}\colon \matr{y}\in S^{d_1\cdot d_2-1}\right\}$.

In general, if $\pi_1 \leq \pi_2$, we can obtain ${\unfold_{\pi_2}(\tensor{A})}$ from ${\unfold_{\pi_1}(\tensor{A})}$ by a series of single unfoldings. Hence, applying the above arguments to these successive unfoldings gives the desired result.
\end{proof}

The following lemma will play a key role in proving our main results:

\begin{lem}[One-Step Inequality]
\label{onestep}
For $1 < \ell \leq k$, let $\tensor{B}\in \mathbb{R}^{d_1\times \cdots\times d_\ell}$ be an order-$\ell$ tensor unfolding of an order-$k$ tensor $\tensor{A}$ induced by the partition $\{B_1, \dots, B_\ell\}\in\mathcal{P}_{[k]}$, and let $\tensor{C}$ be an order-$(\ell-1)$ tensor unfolding of $\tensor{B}$ induced by merging blocks $B_i$ and $B_j$ for some $i, j\in [\ell]$. Then,
\begin{equation}\label{lem:inequality}
\min \left(d_i, d_j \right)^{-1/2}\norm{\tensor{C}}_\sigma \leq \norm{\tensor{B}}_\sigma\leq \norm{\tensor{C}}_\sigma.
\end{equation}
\end{lem}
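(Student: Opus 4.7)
The upper bound $\|\mathcal{B}\|_\sigma \le \|\mathcal{C}\|_\sigma$ is immediate from Proposition~\ref{monotonicity}: since $\mathcal{C}$ is obtained from $\mathcal{B}$ by merging two of its modes, the partition inducing $\mathcal{C}$ is a one-step coarsening of the partition inducing $\mathcal{B}$, so monotonicity applies. Alternatively, one can reprove it directly by mimicking the one-step argument in the proof of Proposition~\ref{monotonicity}.

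For the nontrivial lower bound, I plan to argue by reducing the supremum defining $\|\mathcal{C}\|_\sigma$ to one over rank-$1$ test tensors for $\mathcal{B}$. Without loss of generality, relabel so that the merged modes are $1$ and $2$, and assume $d_1 = \min(d_i, d_j) \le d_2$; then $\mathcal{C}\in\mathbb{R}^{d_1 d_2 \times d_3\times\cdots\times d_\ell}$. Any unit vector $\mathbf{y}\in S^{d_1 d_2-1}$ can be identified with a matrix $\mathbf{Y}\in\mathbb{R}^{d_1\times d_2}$ satisfying $\|\mathbf{Y}\|_F=1$ via $\mathbf{y}=\vec(\mathbf{Y})$, and by Remark~\ref{rmk:Fnorm} (or a direct unwinding of $\phi_\pi$),
\[
\langle \mathcal{C},\ \vec(\mathbf{Y})\otimes \mathbf{x}_3\otimes\cdots\otimes\mathbf{x}_\ell\rangle \;=\; \mathcal{B}(\mathbf{Y}\text{ in modes }1,2;\ \mathbf{x}_3,\ldots,\mathbf{x}_\ell),
\]
meaning the sum $\sum_{i_1,\ldots,i_\ell} b_{i_1\cdots i_\ell}\, Y_{i_1 i_2}\, x^{(3)}_{i_3}\cdots x^{(\ell)}_{i_\ell}$.

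Next I apply the (thin) singular value decomposition $\mathbf{Y}=\sum_{s=1}^{d_1}\sigma_s\, \mathbf{u}_s \mathbf{v}_s^T$ with $\mathbf{u}_s\in S^{d_1-1}$, $\mathbf{v}_s\in S^{d_2-1}$ orthonormal and $\sum_s \sigma_s^2=\|\mathbf{Y}\|_F^2=1$. Multilinearity of $\mathcal{B}$ in its first two modes gives
\[
\mathcal{B}(\mathbf{Y};\mathbf{x}_3,\ldots,\mathbf{x}_\ell)=\sum_{s=1}^{d_1}\sigma_s\,\langle\mathcal{B},\ \mathbf{u}_s\otimes\mathbf{v}_s\otimes\mathbf{x}_3\otimes\cdots\otimes\mathbf{x}_\ell\rangle \;\le\; \|\mathcal{B}\|_\sigma \sum_{s=1}^{d_1}\sigma_s,
\]
since each inner product is bounded by $\|\mathcal{B}\|_\sigma$ in absolute value. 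Cauchy–Schwarz on the singular value vector (which lives in $\mathbb{R}^{d_1}$) yields $\sum_{s=1}^{d_1}\sigma_s \le \sqrt{d_1}\bigl(\sum_s \sigma_s^2\bigr)^{1/2}=\sqrt{d_1}$. Taking the supremum over $\mathbf{Y}$ and $\mathbf{x}_3,\ldots,\mathbf{x}_\ell$ on the unit spheres gives
\[
\|\mathcal{C}\|_\sigma \;\le\; \sqrt{d_1}\,\|\mathcal{B}\|_\sigma \;=\; \min(d_i,d_j)^{1/2}\|\mathcal{B}\|_\sigma,
\]
which is the claimed bound after rearrangement.

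There is no real obstacle: the only subtlety is noticing that the correct dimension factor is the \emph{smaller} of $d_i$ and $d_j$, which is what caps the rank of the auxiliary matrix $\mathbf{Y}$ and hence the number of terms in the SVD that Cauchy–Schwarz is applied to. The identification between rank-$1$ test tensors for $\mathcal{B}$ and rank-$1$ matrices $\mathbf{u}\mathbf{v}^T$ embedded (as their vectorizations) in the set of unit vectors in $\mathbb{R}^{d_1 d_2}$ is exactly the same inclusion used in Proposition~\ref{monotonicity}, but here used in the reverse direction together with the SVD to quantify how much larger the ambient unit ball is.
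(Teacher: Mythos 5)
Your proof is correct, and while it rests on the same underlying fact as the paper's argument --- that the Frobenius and spectral norms of a $d_i\times d_j$ matrix differ by at most a factor of $\min(d_i,d_j)^{1/2}$ --- you deploy it on the opposite side of the pairing. The paper fixes a maximizer $(\matr{x}_1^*,\dots,\matr{x}_{\ell-2}^*,\matr{y}^*)$ for $\norm{\tensor{C}}_\sigma$ (using compactness of the product of spheres), contracts $\tensor{B}$ along all unmerged modes to produce a $d_i\times d_j$ matrix $\text{Mat}(\tensor{C}^*)$, identifies $\norm{\tensor{C}}_\sigma$ with $\norm{\text{Mat}(\tensor{C}^*)}_F$ via self-duality of the Euclidean norm, and then invokes the standard matrix inequality $\norm{\matr{M}}_\sigma\geq \min(m,n)^{-1/2}\norm{\matr{M}}_F$. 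You instead take an \emph{arbitrary} test vector for $\tensor{C}$, reshape its merged-mode component into a matrix $\matr{Y}$ with $\norm{\matr{Y}}_F=1$, expand $\matr{Y}$ in its SVD, bound each of the at most $\min(d_i,d_j)$ rank-one terms by $\norm{\tensor{B}}_\sigma$, and finish with Cauchy--Schwarz on the singular values. Your route is slightly more self-contained (it reproves the matrix inequality inline rather than citing it) and avoids the attainment-of-supremum step, at the cost of having to justify the identity $\langle \tensor{C},\ \vec(\matr{Y})\otimes\matr{x}_3\otimes\cdots\otimes\matr{x}_\ell\rangle=\langle \tensor{B},\ \matr{Y}\otimes\matr{x}_3\otimes\cdots\otimes\matr{x}_\ell\rangle$, which indeed follows from the invariance of the inner product under unfolding (Remark~\ref{rmk:Fnorm}). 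Both arguments correctly isolate $\min(d_i,d_j)$ --- in the paper as the bound on the rank of the contracted matrix, in yours as the cap on the number of SVD terms of the test matrix.
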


\begin{proof}
The upper bound follows readily from Proposition~\ref{monotonicity}. To prove the lower bound, without loss of generality, assume $\tensor{C}$ corresponds to the merging of blocks $B_{\ell-1}$ and $B_\ell$ so that $\tensor{C}$ is a $(d_1, \dots, d_{\ell-2}, d_{\ell-1} \cdot d_\ell)$-dimensional tensor. Note that $S^{d_1-1}\times\dots \times S^{d_{\ell-2}-1}\times S^{d_{\ell-1}d_\ell-1} $ is a compact set, so the supremum~\eqref{def:spectral} is attained in that set for $\tensor{C}$. Then, there exists $(\matr{x}_1^*, \dots, \matr{x}_{\ell-2}^*,\ \matr{y}^*) \in S^{d_1-1}\times\dots \times S^{d_{\ell-2}-1} \times S^{d_{\ell-1}d_\ell-1} $ such that
\begin{equation}\label{eq:maximizer}
\norm{\tensor{C}}_{\sigma}
=\langle \tensor{C}, \ \matr{x}^*_1 \otimes \cdots \otimes \matr{x}^*_{\ell-2}\otimes \matr{y}^*\rangle.
\end{equation}
Define $\tensor{C}^*=\tensor{C}(\matr{x}^{*}_{1},\dots, \matr{x}^{*}_{\ell-2}, \matr{I}_{d_{\ell-1}d_\ell}) \in \mathbb{R}^{d_{\ell-1}d_{\ell}}$. By the self duality of the Frobenius norm in $\mathbb{R}^{d_{\ell-1}d_\ell}$, we can rewrite~\eqref{eq:maximizer} as
\begin{align} \label{eq:partition1}
\norm{\tensor{C}}_\sigma&=\sup \left\{ \langle \tensor{C}^*,\ \matr{y}\rangle\colon \matr{y}\in S^{d_{\ell-1}d_\ell-1} \right\} =\norm{\tensor{C}^*}_F.
\end{align}
On the other hand, if we define $\text{Mat}(\tensor{C}^*) :=\tensor{B}(\matr{x}_1^*, \dots, \matr{x}_{\ell-2}^*, \matr{I}_{\ell-1},\matr{I}_\ell)\in\mathbb{R}^{d_{\ell-1}\times d_\ell}$, then $\tensor{C}^*$ is simply the vectorization of $\text{Mat}(\tensor{C}^*)$.  Hence, by Remark~\ref{rmk:Fnorm}, we obtain
\begin{equation}\label{eq:matrixization}
\norm{\tensor{C}^*}_F=\norm{\text{Mat}(\tensor{C}^*)}_F.
\end{equation} 
Using the definition of $\text{Mat}(\tensor{C}^*)$, we can write
\begin{align}\label{lemma1ineq}
\norm{\tensor{B}}_\sigma & \geq  \sup \left\{ \langle \tensor{B},\ \matr{x}^*_1\otimes\matr{x}^*_2 \otimes \cdots \otimes \matr{x}^*_{\ell-2}\otimes \matr{x}_{\ell-1} \otimes \matr{x}_\ell \rangle \colon  (\matr{x}_{\ell-1},\matr{x}_\ell) \in S^{d_{\ell-1}-1}\times S^{d_{\ell}-1}  \right\}  \notag \\
&=  \sup \left\{ \left \langle \text{Mat}(\tensor{C}^*),\ \matr{x}_{\ell-1} \otimes \matr{x}_\ell  \right \rangle \colon (\matr{x}_{\ell-1},\matr{x}_\ell) \in S^{d_{\ell-1}-1}\times S^{d_{\ell}-1}  \right\}\notag \\
&=\norm{\text{Mat}(\tensor{C}^*)}_\sigma.
\end{align}
Recall that $\text{Mat}(\tensor{C}^*)$ is a matrix of size $d_{\ell-1}\times d_\ell$, meaning~\cite{horn2012matrix}
 \begin{equation}\label{eq:matrixnorm}
\norm{\text{Mat}({\tensor{C}^*})}_\sigma \geq  \min\left (d_{\ell-1} , d_{\ell} \right )^{-1/2}\norm{\text{Mat}({\tensor{C}^*})}_F.
\end{equation}
 Using~\eqref{lemma1ineq} in conjunction with~\eqref{eq:matrixnorm},~\eqref{eq:matrixization}, and~\eqref{eq:partition1}, we then obtain
 \begin{align}
\norm{\tensor{B}}_\sigma & \geq  \min \left(d_{\ell-1}, d_{\ell} \right)^{-1/2}\norm{\text{Mat}(\tensor{C}^*)}_F \\
&=\min\left(d_{\ell-1}, d_\ell \right)^{-1/2}\norm{\tensor{C}^*}_F\\
& =  \min \left(d_{\ell-1}, d_{\ell} \right)^{-1/2}\norm{\tensor{C}}_\sigma,
\end{align}
which proves the lower bound.
\end{proof}

\begin{rmk}
Both bounds in the one-step inequality \eqref{lem:inequality} are sharp. The sharpness of the upper bound will be discussed in Section~\ref{sec:OD}. For the lower bound, consider an order-$\ell$ tensor unfolding $\tensor{B}= \left(\sum_{i=1}^{d_1}\matr{e}_{1,i}\otimes \matr{e}_{2,i} \right)\otimes \tensor{D}\in\mathbb{R}^{d_1\times d_2\times \cdots \times d_\ell}$, 
where $\{\matr{e}_{1,i}: i\in[d_1]\}$ is the standard orthonormal basis of $\mathbb{R}^{d_1}$, $\{\matr{e}_{2,i}: i\in[d_1]\}$ is a set of $d_1$ standard basis vectors in $\mathbb{R}^{d_2}$, and $\tensor{D}$ is an arbitrary $(d_3, \ldots, d_\ell)$-dimensional tensor.  Assume $d_1\leq d_2$ and consider the two blocks $B_1=\{1\}, B_2=\{2\}$. By the unfolding operation specified in Lemma~\ref{onestep} with $B_1$ and $B_2$, $\tensor{C}=\left(\sum_{i=1}^{d_1}\vec(\matr{e}_{1,i}\otimes\matr{e}_{2,i})\right)\otimes \tensor{D}\in\mathbb{R}^{d_1\cdot d_2\times d_3 \times \cdots\times d_\ell}$, which is an order-$(\ell-1)$ tensor unfolding by merging the first two modes of $\tensor{B}$ into a single mode. It follows that $\norm{\tensor{B}}_\sigma=\norm{\tensor{D}}_\sigma$ and $\norm{\tensor{C}}_\sigma=\sqrt{d_1}\norm{\tensor{D}}_\sigma$, and therefore the left-hand-side inequality in \eqref{lem:inequality} is saturated.
\end{rmk}

\medskip
More generally, we can establish inequalities relating the spectral norms of two tensor unfoldings corresponding to two arbitrary partitions $\pi_1, \pi_2\in\mathcal{P}_{[k]}$ that are not necessarily comparable. To do so, we must first introduce the following definition. 

\begin{defn} \label{def:vol}
Given an order-$k$ tensor $\tensor{A}\in\mathbb{R}^{d_1\times \cdots\times d_k}$, we define the map $\dim_{\tensor{A}}\colon \mathcal{P}_{[k]} \times \mathcal{P}_{[k]} \rightarrow \mathbb{N}_{+}$ as
\begin{equation}
\dim_{\tensor{A}}(\pi_1, \pi_2)  =  \prod\limits_{B\in \pi_1} \max\limits_{B'\in\pi_2} D_\tensor{A}(B, B' ), \quad \text{where} \quad D_\tensor{A}(B,B')=\prod\limits_{n\in B \cap B' }d_{n},
\end{equation}
where $\pi_1,\pi_2 \in \mathcal{P}_{[k]}$.

\medskip
We label this quantity as $\dim_{\tensor{A}}(\cdot, \cdot)$ because it involves a product of a subset of the dimensions of $\tensor{A}$, and $\dim_{\tensor{A}}(\pi_1, \pi_2) \leq \dimA$ with equality only when $\pi_1 = \pi_1 \wedge \pi_2$. Intuitively, $\dim_{\tensor{A}}(\pi_1, \pi_2)$ reflects the overlap between the unfoldings induced by $\pi_1$ and $\pi_1 \wedge \pi_2$. Example~\ref{exmp:Vol} presents a concrete illustration. 
\end{defn}

\begin{rmk}
We set $D_\tensor{A}(B, B')=0$ when $B \cap B' =\emptyset$. This does not affect the multiplication in Definition~\ref{def:vol} because a block of $\pi_1$ cannot be disjoint from every block of $\pi_2$. Note that $D_\tensor{A}(B, B')=D_\tensor{A}(B',B)$, but in general $\dim_{\tensor{A}}(\pi_1, \pi_2)\neq \dim_{\tensor{A}}(\pi_2, \pi_1)$.
\end{rmk}

\begin{exmp}
\label{exmp:Vol}
To illustrate the above map, let $\tensor{A}\in \mathbb{R}^{d_1 \times d_2\times d_3\times d_4}$ and  consider the partitions $\pi_1 = \{\{1,2\},\{3,4\}\}$ and $\pi_2 = \{\{1,2,3\},\{4\}\}$, for which $\pi_1 \wedge \pi_2 = \{\{1,2\}, \{3\}, \{4\} \}.$ From Definition~\ref{def:vol},
\begin{align}
\begin{array}{lllll}
D_\tensor{A}(\{1,2\},\{1,2,3\}) & = d_1d_2 \\
D_\tensor{A}(\{3,4\},\{1,2,3\}) &= d_3 \\
D_\tensor{A}( \{1,2\},\{4\}) &= 0 \\
D_\tensor{A}(\{3,4\},\{4\}) &= d_4.
\end{array}
\end{align}
Then,
\begin{align}
 \dim_{\tensor{A}} (\pi_1, \pi_2) &=\max \big\{D_\tensor{A}(\{1,2\},\{1,2,3\}) ,  D_\tensor{A}(\{1,2\},\{4\})\big\} \\
 & \hspace{1cm}\times \max \big\{D_\tensor{A}(\{3,4\},\{1,2,3\})  ,  D_\tensor{A}(\{3,4\},\{4\})\big\}  \\
 &=d_1d_2 \max\{d_3, d_4\}.
\end{align}
Exchanging arguments, we find
\begin{align}
 \dim_{\tensor{A}} (\pi_2, \pi_1) &=  \max \big\{D_\tensor{A}(\{1,2,3\},\{1,2\})  ,  D_\tensor{A}(\{1,2,3\},\{3,4\})\big\}\\
 & \hspace{1cm} \times  \max \big\{D_\tensor{A}(\{4\},\{1,2\})  ,  D_\tensor{A}(\{4\},\{3,4\})\big\}  \\
 &=d_4\max\{d_1 d_2, d_3\}.
\end{align}
\end{exmp}

\begin{rmk}
As in Lemma~\ref{onestep}, if $\pi_2$ is a one-step coarsening of $\pi_1$ obtained by merging two blocks $B^{\pi_1}_{i}$ and $B^{\pi_1}_{j}$ of dimensions $d_{i}$ and $d_{j}$ into a single block $B^{\pi_2}_1$, then
\begin{align}
D_\tensor{A}(B^{\pi_1}_i,B^{\pi_2}_1) &= d_{i}, \\
D_\tensor{A}(B^{\pi_1}_j,B^{\pi_2}_1) &= d_{j}.
\end{align}
Hence, Lemma~\ref{onestep} can be written as
\begin{equation}
\min \big\{D_\tensor{A}(B^{\pi_1}_i, B^{\pi_2}_1), D_\tensor{A}(B^{\pi_1}_j, B^{\pi_2}_1)\big\}^{-1/2}\norm{\unfold_{\pi_2}(\tensor{A})}_\sigma \leq \norm{\unfold_{\pi_1}(\tensor{A})}_\sigma\leq \norm{\unfold_{\pi_2}(\tensor{A})}_\sigma.
\end{equation}
\end{rmk}

\medskip
Having introduced Definition~\ref{def:vol}, we can now state our main result on how the spectral norms of two arbitrary unfoldings of a tensor are related:

\begin{thm}[Spectral norm inequalities] \label{thm:main}
Let $\tensor{A}\in\mathbb{R}^{d_1\times \cdots \times d_k}$ be an arbitrary order-$k$ tensor, and $\pi_1, \pi_2$ any two partitions in $\mathcal{P}_{[k]}$. Then, 
$$
\left[\dimA \over \dim_{\tensor{A}} (\pi_1, \pi_2)\right]^{-1/2} \norm{\unfold_{\pi_1}({\tensor{A})}}_\sigma \leq \norm{\unfold_{\pi_2}({\tensor{A})}}_\sigma \leq \left[ {\dimA \over \dim_{\tensor{A}} (\pi_2, \pi_1)} \right]^{1/2} \norm{\unfold_{\pi_1}({\tensor{A})}}_\sigma.
$$
\end{thm}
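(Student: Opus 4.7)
The plan is to route from $\pi_1$ to $\pi_2$ via their meet $\pi_1\wedge\pi_2$, which is comparable to both. By Proposition~\ref{monotonicity}, $\norm{\unfold_{\pi_1\wedge\pi_2}(\tensor{A})}_\sigma \leq \norm{\unfold_{\pi_1}(\tensor{A})}_\sigma$, so it suffices to bound $\norm{\unfold_{\pi_2}(\tensor{A})}_\sigma$ from above by a multiple of $\norm{\unfold_{\pi_1\wedge\pi_2}(\tensor{A})}_\sigma$. Since $\pi_1\wedge\pi_2 \leq \pi_2$, one can reach $\pi_2$ from $\pi_1\wedge\pi_2$ by a sequence of one-step coarsenings, each of which merges two blocks lying inside a common block $B\in\pi_2$; the blocks of $\pi_1\wedge\pi_2$ contained in $B$ are precisely $\{B\cap B' : B'\in\pi_1,\ B\cap B'\neq\emptyset\}$, with dimensions $D_\tensor{A}(B,B')=\prod_{n\in B\cap B'} d_n$.

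The key step is choosing a merging order that makes Lemma~\ref{onestep} contribute the tightest factor. For each $B\in\pi_2$, I would reserve the sub-block $B\cap B^*$ attaining $\max_{B'\in\pi_1} D_\tensor{A}(B,B')$ and absorb the remaining sub-blocks into it one at a time. Because the reserved sub-block already has dimension at least that of any other piece and the running dimension of the absorbing block only grows, the quantity $\min(d_i,d_j)$ in Lemma~\ref{onestep} equals the dimension $D_\tensor{A}(B,B')$ of the piece currently being absorbed at every step. Iterating the lemma therefore contributes a factor of $\sqrt{D_\tensor{A}(B,B')}$ for each non-maximal sub-block, giving a per-block product of
\[
\sqrt{\textstyle\prod_{n\in B} d_n \,\big/\, \max_{B'\in\pi_1} D_\tensor{A}(B,B')}.
\]

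Multiplying these factors across all $B\in\pi_2$, the numerators combine to $\dimA$, while the denominators combine to $\dim_\tensor{A}(\pi_2,\pi_1)$ by Definition~\ref{def:vol}. Chained with the monotonicity step above, this yields
\[
\norm{\unfold_{\pi_2}(\tensor{A})}_\sigma \,\leq\, \sqrt{\dimA \,\big/\, \dim_\tensor{A}(\pi_2,\pi_1)}\ \norm{\unfold_{\pi_1}(\tensor{A})}_\sigma,
\]
which is the stated upper bound. The lower bound follows immediately by interchanging the roles of $\pi_1$ and $\pi_2$ in the upper bound and rearranging. The principal obstacle is the combinatorial bookkeeping in the merging argument: one must verify that the reserved-maximum strategy keeps each application of Lemma~\ref{onestep} tight (so that the minimum selects the smaller, absorbed dimension rather than the running total), and that every intermediate partition produced by these merges lies between $\pi_1\wedge\pi_2$ and $\pi_2$ in the lattice, so that Lemma~\ref{onestep} is genuinely applicable at each step.
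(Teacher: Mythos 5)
Your proposal is correct and follows essentially the same route as the paper: pass through the meet $\pi_1\wedge\pi_2$, use monotonicity on one leg, and on the other leg iterate the one-step inequality with the sub-blocks merged in an order (your ``reserved-maximum'' strategy, the paper's decreasing-dimension ordering) that guarantees each application of Lemma~\ref{onestep} contributes only the dimension of the absorbed piece, so the per-block factors telescope to $\dimA/\dim_{\tensor{A}}(\pi_2,\pi_1)$. The paper merely organizes the symmetry differently, deriving the two-sided sandwich against $\pi_1\wedge\pi_2$ for each of $\pi_1$ and $\pi_2$ before combining, whereas you obtain the lower bound by swapping the roles of the two partitions at the end; the content is identical.
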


\medskip
\begin{rmk}\ 
\begin{enumerate}
\item[(a)] Note that $\pi_1$ and $\pi_2$ need not be comparable.
\item[(b)]  If  $d_n = d$ for all $n \in [k]$, then the result reduces to
\[
d^{-c_1/2 } \norm{\unfold_{\pi_1}({\tensor{A})}}_\sigma \leq \norm{\unfold_{\pi_2}({\tensor{A})}}_\sigma \leq d^{c_2/2 } \norm{\unfold_{\pi_1}({\tensor{A})}}_\sigma, 
\]
where $c_1 = (k - \sum_{B \in \pi_1} \max_{B' \in \pi_2} | B \cap B'|)$ and $c_2 = (k - \sum_{B \in \pi_2} \max_{B' \in \pi_1} | B \cap B'|)$.
\item[(c)]  For $k=4$, the above inequalities in (b) are sharp.  For example, 
consider the tensor $\tensor{A}=\matr{I}_d\otimes \matr{I}_d$, and partitions $\pi=\{\{1, 2\}, \{3,4\}\}$ and $\pi'=\{\{1\},\{2\},\{3\},\{4\}\}$, for which we have $\norm{\unfold_{\pi}(\tensor{A})}_\sigma=d$ and $\norm{\unfold_{\pi'}({\tensor{A})}}_\sigma = 1$.
If $\pi_1=\pi$ and $\pi_2=\pi'$, then $c_1=2$ and $d^{-c_1/2} \norm{\unfold_{\pi_1}(\tensor{A})}_\sigma = \norm{\unfold_{\pi_2}({\tensor{A})}}_\sigma$.  On the other hand, if $\pi_1=\pi'$  and $\pi_2=\pi$, then we have $c_2=2$ and $\norm{\unfold_{\pi_2}({\tensor{A})}}_\sigma = d^{c_2/2} \norm{\unfold_{\pi_1}({\tensor{A})}}_\sigma$.  This particular tensor is further discussed later in Example~\ref{exam:bound}. 

\end{enumerate}
\end{rmk}

\medskip
\begin{proof}[Proof of Theorem~\ref{thm:main}]
The main idea is to apply Lemma~\ref{onestep} to some appropriate sequence of partitions connecting $\pi_1$ and $\pi_2$ in the partition lattice. To do so, we consider $\unfold_{\pi_1 \wedge \pi_2}({\tensor{A})}$ and compare its spectral norm to that of  $\unfold_{\pi_1}({\tensor{A})}$.  Since $\pi_1\wedge \pi_2 \leq \pi_1$, from Proposition~\ref{monotonicity} we have
\begin{equation}\label{ineqproofp1}
\norm{\unfold_{\pi_1\wedge \pi_2}({\tensor{A})}}_\sigma   \leq \norm{\unfold_{\pi_1}({\tensor{A})}}_\sigma.
\end{equation}
Let $\pi_1=\{B^{\pi_1}_i\colon i\in[\ell_1]\}$ and $\pi_2=\{B^{\pi_2}_j\colon j\in[\ell_2]\}$, and note that $\pi_1\wedge \pi_2=\{B^{\pi_1}_i\cap B^{\pi_2}_j \colon i\in[\ell_1], j\in[\ell_2]\}$.  Now, order the blocks in $\pi_2$ such that
\begin{equation}\label{mergelow}
D_\tensor{A}(B^{\pi_1}_1,B^{\pi_2}_{m_1}) \geq D_\tensor{A}(B^{\pi_1}_1,B^{\pi_2}_{m_2}) \geq \cdots \geq  D_\tensor{A}(B^{\pi_1}_1,B^{\pi_2}_{m_{\ell_2}}),
\end{equation}
and define $B_{1j} = B^{\pi_1}_1\cap B^{\pi_2}_{m_j}$.
Consider a sequence $(\tensor{T}_1,\ \tensor{T}_2,\ \dots,\ \tensor{T}_{\ell_2-1})$ of unfoldings of $\tensor{A}$ where $\tensor{T}_j$, for $1\leq j \leq \ell_2-1$, is obtained from the tensor $\unfold_{\pi_1\wedge \pi_2}({\tensor{A})}$ by an unfolding operation corresponding to merging the blocks $B_{1,1},\ldots,B_{1,j+1}$ into a single block.
Using Lemma~\ref{onestep} and~\eqref{mergelow}, we obtain
\begin{align}
\norm{\unfold_{\pi_1\wedge \pi_2}({\tensor{A})}}_\sigma  & \geq  \min\{ D_\tensor{A}(B^{\pi_1}_1, B^{\pi_2}_{m_1}), D_\tensor{A}(B^{\pi_1}_1, B^{\pi_2}_{m_2})\}^{-1/2}\norm{\tensor{T}_1}_\sigma \\ 
& =\left[D_\tensor{A}(B^{\pi_1}_1, B^{\pi_2}_{m_2})\right]^{-1/2}\norm{\tensor{T}_1}_\sigma.
\end{align}
Similarly for all $i\in [\ell_2 -2]$,
\begin{align}
\norm{\tensor{T}_i}_\sigma & \geq \min \Big\{D_\tensor{A}(B^{\pi_1}_1, B^{\pi_2}_{m_{i+2}}) \ , \  \prod\limits_{j=1}^{i+1}D_\tensor{A}(B^{\pi_1}_1, B^{\pi_2}_{m_j}) \Big\}^{-1/2}\norm{\tensor{T}_{i+1}}_\sigma   \\ 
& =  \big[D_\tensor{A}(B^{\pi_1}_1, B^{\pi_2}_{m_{i+2}})\big]^{-1/2}\norm{\tensor{T}_{i+1}}_\sigma.
\end{align}
Combining these inequalities gives
\begin{align}
\norm{\unfold_{\pi_1\wedge \pi_2}({\tensor{A})}}_\sigma   &\geq 
\left[\prod\limits_{j=2}^{\ell_2} D_\tensor{A}(B^{\pi_1}_1, B^{\pi_2}_{m_j})\right]^{-1/2} \norm{\tensor{T}_{\ell_2 -1}}_\sigma \\
& =  \left[ \frac{\prod\limits_{j=1}^{\ell_2} D_\tensor{A}(B^{\pi_1}_1, B^{\pi_2}_j)}{\max\limits_{j \in [\ell_2]} \{D_\tensor{A}(B^{\pi_1}_1, B^{\pi_2}_j)\} } \right]^{-1/2}\norm{\tensor{T}_{\ell_2 -1}}_\sigma.
\end{align}
We can iterate the same line of argument with $\{ B^{\pi_1}_i\cap B^{\pi_2}_{m_j} \colon  j\in [\ell_2]\}$ for $i = 2,\ldots,\ell_1$ to obtain
\begin{align}
\norm{\unfold_{\pi_1\wedge \pi_2}({\tensor{A})}}_\sigma   &\geq\prod\limits_{i=1}^{\ell_1}\left[ \frac{\prod\limits_{j=1}^{\ell_2} D_\tensor{A}(B^{\pi_1}_i, B^{\pi_2}_j)}{\max\limits_{j \in [\ell_2]} \{D_\tensor{A}(B^{\pi_1}_i, B^{\pi_2}_j)\} } \right]^{-1/2}\norm{\unfold_{\pi_1}({\tensor{A})}}_\sigma \\
&=\left[\frac{ \prod\limits_{i=1}^{\ell_1}\prod\limits_{j=1}^{\ell_2}D_\tensor{A}(B^{\pi_1}_i, B^{\pi_2}_j)}{\prod\limits_{i=1}^{\ell_1}\max\limits_{j \in [\ell_2]} \{D_\tensor{A}(B^{\pi_1}_i, B^{\pi_2}_j)\}} \right]^{-1/2}\norm{\unfold_{\pi_1}({\tensor{A})}}_\sigma \\
&=  \left[{\prod_{n\in[k]} d_n \over \dim_{\tensor{A}}(\pi_1, \pi_2)}\right]^{-1/2} \norm{\unfold_{\pi_1}({\tensor{A})}}_\sigma.
\end{align}
Together with~\eqref{ineqproofp1}, this last inequality means
\begin{equation}
\label{eq:triple1}
\norm{\unfold_{\pi_1\wedge \pi_2}({\tensor{A})}}_\sigma   \leq \norm{\unfold_{\pi_1}({\tensor{A})}}_\sigma  \leq \left[ {\prod_{n\in[k]} d_n \over \dim_{\tensor{A}}(\pi_1, \pi_2)}\right]^{1/2} \norm{\unfold_{\pi_1\wedge \pi_2}({\tensor{A})}}_\sigma.
\end{equation}
By symmetry, 
\begin{equation}\label{eq:triple2}
\norm{\unfold_{\pi_1\wedge \pi_2}({\tensor{A})}}_\sigma   \leq \norm{\unfold_{\pi_2}({\tensor{A})}}_\sigma \leq  \left[ {\prod_{n\in[k]} d_n \over \dim_{\tensor{A}}(\pi_2, \pi_1)}\right]^{1/2} \norm{\unfold_{\pi_1\wedge \pi_2}({\tensor{A})}}_\sigma.
\end{equation}
Finally, combining~\eqref{eq:triple1} and~\eqref{eq:triple2} completes the proof. 
\end{proof}

\medskip

We may immediately establish several corollaries of Theorem~\ref{thm:main}. 

\begin{cor}\label{cor:frobenius}
All order-$k$ tensors $\tensor{A}\in\mathbb{R}^{d_1\times \cdots \times d_k}$ satisfy
\[
\norm{\tensor{A}}_F\leq \left[\dimA \over\max_{n\in[k]} {d_n}\right]^{1/2}\norm{\tensor{A}}_\sigma.
\]
\end{cor}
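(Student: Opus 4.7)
The plan is to deduce the corollary directly from Theorem~\ref{thm:main} by choosing the two extremal partitions in $\mathcal{P}_{[k]}$. Specifically, I would take $\pi_1 = \zerok = \{\{1\},\ldots,\{k\}\}$ and $\pi_2 = \onek = \{[k]\}$, since by the remark following Definition~\ref{def:unfold} these correspond respectively to the identity unfolding $\unfold_{\zerok}(\tensor{A}) = \tensor{A}$ and the full vectorization $\unfold_{\onek}(\tensor{A}) = \vec(\tensor{A})$. Applying Proposition~\ref{monotonicity} (or just the definition of the spectral norm on a vector) yields $\norm{\unfold_{\zerok}(\tensor{A})}_\sigma = \norm{\tensor{A}}_\sigma$ and $\norm{\unfold_{\onek}(\tensor{A})}_\sigma = \norm{\vec(\tensor{A})}_2 = \norm{\tensor{A}}_F$.

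Next I would compute $\dim_{\tensor{A}}(\onek, \zerok)$ explicitly. Since $\onek$ has only the single block $B = [k]$, the outer product in Definition~\ref{def:vol} collapses to a single factor, and since every block of $\zerok$ is a singleton $\{n\}$, one has $D_{\tensor{A}}(B, \{n\}) = d_n$. Therefore
\[
\dim_{\tensor{A}}(\onek, \zerok) = \max_{B' \in \zerok} D_{\tensor{A}}([k], B') = \max_{n\in[k]} d_n.
\]

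Finally, plugging $\pi_1 = \zerok$ and $\pi_2 = \onek$ into the right-hand inequality of Theorem~\ref{thm:main} gives
\[
\norm{\tensor{A}}_F = \norm{\unfold_{\onek}(\tensor{A})}_\sigma \leq \left[\frac{\dimA}{\dim_{\tensor{A}}(\onek,\zerok)}\right]^{1/2}\norm{\unfold_{\zerok}(\tensor{A})}_\sigma = \left[\frac{\dimA}{\max_{n\in[k]}d_n}\right]^{1/2}\norm{\tensor{A}}_\sigma,
\]
which is exactly the claim. There is no real obstacle here: the whole argument is a single evaluation of the master inequality at the top and bottom elements of the partition lattice, with all the conceptual work already absorbed into Theorem~\ref{thm:main}. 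The only place where one might slip is the identification $\norm{\vec(\tensor{A})}_\sigma = \norm{\tensor{A}}_F$, but this is immediate from Definition~\ref{def:norm} specialized to order-$1$ tensors together with Remark~\ref{rmk:Fnorm}.
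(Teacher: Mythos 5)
Your proposal is correct and follows exactly the paper's own proof, which simply takes $\pi_1=\zerok$ and $\pi_2=\onek$ in Theorem~\ref{thm:main}; your explicit evaluation of $\dim_{\tensor{A}}(\onek,\zerok)=\max_{n\in[k]}d_n$ and the identification $\norm{\vec(\tensor{A})}_\sigma=\norm{\tensor{A}}_F$ are the details the paper leaves implicit.
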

\begin{proof}
Taking $\pi_1=\zerok$ and $\pi_2=\onek$ in Theorem~\ref{thm:main} yields the result. 
\end{proof}

Corollary~\ref{cor:frobenius} gives the worst-case ratio of the Frobenius norm to the spectral norm for an arbitrary tensor.  This ratio is sharper than the bound recently found by Friedland and Lim~\cite[Lemma~5.1]{friedland2016computational},  namely $\norm{\tensor{A}}_F\leq \dimA^{1/2}\norm{\tensor{A}}_\sigma$.

We now give a set of inequalities comparing the spectral norms of unfoldings at level $\ell$ to that at either level $k$ or level $1$. For ease of exposition, we assume $d_n=d$ for all $n\in[k]$.

\begin{cor}[Bottom-Up Inequality]  \label{thm:bottomup}
 Let $\tensor{A}\in\mathbb{R}^{d\times \cdots \times d}$ be an order-$k$ tensor with the same dimension $d$ in all modes. For all levels $1\leq \ell \leq k$ and partitions $\pi \in \mathcal{P}_{[k]}^\ell$,
\begin{equation}\label{eq:bottomup}
 d^{-(k-\ell)/2}  \max_{\pi\in \mathcal{P}^{\ell}_{[k]}} \norm{ {\unfold_{\pi}(\tensor{A})}}_\sigma \leq \norm{\tensor{A}}_\sigma \leq \min_{\pi\in\mathcal{P}^{\ell}_{[k]}} \norm{ \unfold_\pi(\tensor{A})}_\sigma.
\end{equation}
\end{cor}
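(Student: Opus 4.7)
The plan is to derive both inequalities as a direct specialization of Theorem~\ref{thm:main}, taking one partition to be $\zerok$ and the other to be an arbitrary level-$\ell$ partition $\pi$. The only nontrivial step is to evaluate the quantities $\dim_{\tensor{A}}(\zerok, \pi)$ and $\dim_{\tensor{A}}(\pi, \zerok)$ under the hypothesis that $d_n = d$ for every $n\in[k]$, so that Theorem~\ref{thm:main} collapses to the prefactor $d^{(k-\ell)/2}$.

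First I would handle the upper bound. Since $\zerok \leq \pi$ for every $\pi\in\mathcal{P}_{[k]}$, Proposition~\ref{monotonicity} already gives $\norm{\tensor{A}}_\sigma = \norm{\unfold_{\zerok}(\tensor{A})}_\sigma \leq \norm{\unfold_{\pi}(\tensor{A})}_\sigma$ for every level-$\ell$ partition, and taking the minimum over $\pi\in\mathcal{P}^\ell_{[k]}$ yields the right-hand inequality in~\eqref{eq:bottomup}. (Alternatively, this is encoded in Theorem~\ref{thm:main} with $\pi_1=\zerok$ and $\pi_2=\pi$: because every block of $\zerok$ is a singleton $\{n\}$, the intersection $\{n\}\cap B'$ is nonempty for exactly one $B'\in\pi$, so $\max_{B'\in\pi} D_\tensor{A}(\{n\}, B')=d$ for each $n$, giving $\dim_\tensor{A}(\zerok,\pi)=d^k=\dimA$ and hence a trivial multiplicative factor of $1$ on the left side of Theorem~\ref{thm:main}.)

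For the lower bound, I would apply Theorem~\ref{thm:main} with $\pi_1=\pi\in\mathcal{P}^\ell_{[k]}$ and $\pi_2=\zerok$. Here the relevant quantity is $\dim_\tensor{A}(\pi,\zerok)=\prod_{B\in\pi}\max_{n\in[k]} D_\tensor{A}(B,\{n\})$; for each block $B\in\pi$ the inner maximum equals $d$ (attained whenever $n\in B$), and since $|\pi|=\ell$ this product equals $d^\ell$. Combined with $\dimA=d^k$, Theorem~\ref{thm:main} then yields
\begin{equation}
\left[\dimA/\dim_\tensor{A}(\pi,\zerok)\right]^{-1/2}\norm{\unfold_\pi(\tensor{A})}_\sigma = d^{-(k-\ell)/2}\norm{\unfold_\pi(\tensor{A})}_\sigma \leq \norm{\tensor{A}}_\sigma,
\end{equation}
and taking the supremum over $\pi\in\mathcal{P}^\ell_{[k]}$ produces the left-hand inequality in~\eqref{eq:bottomup}.

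There is essentially no conceptual obstacle; the argument is a bookkeeping exercise on Definition~\ref{def:vol}. The only place to be careful is verifying that $\dim_\tensor{A}(\pi,\zerok)$ is genuinely $d^\ell$ (not $d^k$) — this is where the order of the arguments to $\dim_\tensor{A}$ matters, since the product is indexed by blocks of the \emph{first} argument and in the level-$\ell$ case there are only $\ell$ such blocks, each contributing a single factor of $d$.
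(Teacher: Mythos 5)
Your proof is correct and follows the same route as the paper, which simply takes $\pi_2=\zerok$ in Theorem~\ref{thm:main}; you have additionally spelled out the bookkeeping that $\dim_{\tensor{A}}(\pi,\zerok)=d^\ell$ and $\dim_{\tensor{A}}(\zerok,\pi)=d^k$, which the paper leaves implicit.
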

\begin{proof} Take $\pi_2 = \zerok$ in Theorem~\ref{thm:main}. \end{proof}

\begin{rmk}
The existing work that is most closely related to our own is that of Hu's \cite{hu2015relations}, in which the author bounds the nuclear norm of a tensor by that of its matricization. Since the nuclear norm and spectral norm are dual to each other in tensor space, many of our results apply to the nuclear norm as well. Particularly, letting $\ell=2$ in the bottom-up inequality \eqref{eq:bottomup} reproduces Hu's results. 
\end{rmk}

\begin{cor}[Top-Down Inequality]  \label{thm:topdown}
Let $\tensor{A}\in\mathbb{R}^{d\times \cdots \times d}$ be an order-$k$ tensor  with the same dimension $d$ in all modes. For all levels $1\leq \ell \leq k$ and partitions $\pi \in \mathcal{P}_{[k]}^\ell$,
\begin{equation} \label{eq:topdown}
d^{- (k- \max_{i \in [\ell]}|B_i^{\pi}|)/2}\norm{\tensor{A}}_F \leq \norm{\unfold_{\pi}({\tensor{A})}}_\sigma \leq \norm{\tensor{A}}_F.
\end{equation}
\end{cor}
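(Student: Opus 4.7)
The plan is to obtain the Top-Down Inequality as a direct corollary of Theorem~\ref{thm:main} by choosing the two partitions to be $\pi_1 = \pi$ and $\pi_2 = \onek = \{[k]\}$, i.e., comparing the level-$\ell$ unfolding $\unfold_\pi(\tensor{A})$ against the fully vectorized unfolding $\unfold_{\onek}(\tensor{A}) = \vec(\tensor{A})$. The key observation that connects Theorem~\ref{thm:main} to the Frobenius norm is that $\norm{\unfold_{\onek}(\tensor{A})}_\sigma = \norm{\vec(\tensor{A})}_2 = \norm{\tensor{A}}_F$, since the spectral norm of a vector coincides with its Euclidean (Frobenius) norm.

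The bulk of the work is a short computation of the two overlap quantities appearing in Theorem~\ref{thm:main}. For $\pi_2 = \onek$, which has the single block $[k]$, every block $B \in \pi$ satisfies $B \cap [k] = B$, so $D_\tensor{A}(B, [k]) = d^{|B|}$ and therefore
\[
\dim_\tensor{A}(\pi, \onek) \;=\; \prod_{B\in\pi} d^{|B|} \;=\; d^{\sum_{B\in\pi}|B|} \;=\; d^k \;=\; \dimA .
\]
Swapping the roles, the single block $[k]$ of $\onek$ satisfies $[k] \cap B' = B'$ for every $B' \in \pi$, so
\[
\dim_\tensor{A}(\onek, \pi) \;=\; \max_{B'\in \pi} d^{|B'|} \;=\; d^{\max_{i\in[\ell]}|B_i^\pi|}.
\]

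Substituting these into the two-sided inequality of Theorem~\ref{thm:main} with $\pi_1=\pi$ and $\pi_2 = \onek$, the left factor becomes $[\dimA/\dimA]^{-1/2} = 1$, giving the upper bound $\norm{\unfold_\pi(\tensor{A})}_\sigma \leq \norm{\tensor{A}}_F$ (which is also immediate from Proposition~\ref{monotonicity}, since $\pi \leq \onek$). The right factor becomes $[d^k/d^{\max_i |B_i^\pi|}]^{1/2} = d^{(k-\max_i|B_i^\pi|)/2}$, and rearranging the resulting inequality $\norm{\tensor{A}}_F \leq d^{(k-\max_i|B_i^\pi|)/2}\norm{\unfold_\pi(\tensor{A})}_\sigma$ yields the lower bound in \eqref{eq:topdown}.

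There is no real obstacle here; the corollary is essentially a bookkeeping exercise once the correct pair $(\pi,\onek)$ is recognized as the right input to Theorem~\ref{thm:main}. The only point that deserves a brief sanity check in the write-up is that the block $B'$ in $\pi$ that achieves the maximum $|B'| = \max_i |B_i^\pi|$ is exactly the one that dictates the exponent, since the remaining modes (of total size $d^{k-\max_i|B_i^\pi|}$) correspond to directions in which no rank-$1$ constraint is relaxed by the unfolding. This gives a conceptual reading of the bound: the looseness of the rank-$1$ approximation used to define $\norm{\unfold_\pi(\tensor{A})}_\sigma$ relative to $\norm{\tensor{A}}_F$ is controlled by the size of the largest merged block in $\pi$.
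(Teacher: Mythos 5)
Your proof is correct and is essentially the paper's own argument: the paper likewise obtains the corollary by specializing Theorem~\ref{thm:main} to the pair $\{\pi,\onek\}$ (it labels $\pi_1=\onek$ where you label $\pi_2=\onek$, which after rearrangement gives the identical chain of inequalities), and your computations of $\dim_\tensor{A}(\pi,\onek)=\dimA$ and $\dim_\tensor{A}(\onek,\pi)=d^{\max_i|B_i^\pi|}$ are exactly right.
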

\begin{proof} Take $\pi_1 = \onek$ in Theorem~\ref{thm:main}. \end{proof}

\begin{cor}
Let $\tensor{A}\in\mathbb{R}^{d\times \cdots \times d}$ be an order-$k$ tensor  with the same dimension $d$ in all modes.  For all levels $1\leq \ell \leq k$,
\begin{equation} \label{eq:topdown2}
d^{-(k-\lceil {k/ \ell} \rceil)/2}\norm{\tensor{A}}_F \leq \min_{\pi\in \mathcal{P}^{\ell}_{[k]}}\norm{\unfold_{\pi}({\tensor{A})}}_\sigma.
\end{equation}
\end{cor}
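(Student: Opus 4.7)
The plan is to derive this as a direct consequence of the Top-Down Inequality (Corollary~\ref{thm:topdown}) combined with a pigeonhole estimate of the largest block size in a level-$\ell$ partition of $[k]$. The Top-Down Inequality says that for every $\pi\in\mathcal{P}_{[k]}^{\ell}$,
\[
d^{-(k-\max_{i\in[\ell]}|B_i^{\pi}|)/2}\,\norm{\tensor{A}}_F \;\leq\; \norm{\unfold_{\pi}(\tensor{A})}_\sigma,
\]
so the only ingredient I need is a uniform lower bound on the left-hand side that does not depend on the particular partition $\pi$.

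First, I would observe that if $\pi=\{B_1^\pi,\dots,B_\ell^\pi\}$ is any partition of $[k]$ into exactly $\ell$ nonempty blocks, then by the pigeonhole principle at least one block contains at least $\lceil k/\ell\rceil$ elements, i.e.
\[
\max_{i\in[\ell]}|B_i^{\pi}| \;\geq\; \lceil k/\ell \rceil \qquad\text{for every }\pi\in\mathcal{P}_{[k]}^\ell.
\]
Equivalently, $k-\max_{i}|B_i^\pi|\leq k-\lceil k/\ell\rceil$. Since $d\geq 1$, the function $t\mapsto d^{-t/2}$ is nonincreasing in $t$, so this inequality translates to
\[
d^{-(k-\lceil k/\ell\rceil)/2} \;\leq\; d^{-(k-\max_{i\in[\ell]}|B_i^{\pi}|)/2}
\qquad\text{for every }\pi\in\mathcal{P}_{[k]}^{\ell}.
\]

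Combining the last display with the Top-Down Inequality, I get the $\pi$-uniform lower bound
\[
d^{-(k-\lceil k/\ell\rceil)/2}\,\norm{\tensor{A}}_F \;\leq\; \norm{\unfold_{\pi}(\tensor{A})}_\sigma
\qquad\text{for every }\pi\in\mathcal{P}_{[k]}^{\ell},
\]
and taking the minimum over $\pi\in\mathcal{P}_{[k]}^\ell$ on the right-hand side yields the claimed inequality~\eqref{eq:topdown2}.

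There is no real obstacle in this argument: everything is immediate once the Top-Down Inequality is in hand, and the only subtlety worth double-checking is the direction of the inequality---namely that the quantity $d^{-(k-\max_i|B_i^{\pi}|)/2}$ is minimized (not maximized) when the partition is as balanced as possible, which is exactly the case treated by the pigeonhole bound $\max_i|B_i^\pi|\geq\lceil k/\ell\rceil$. I would also note in passing that the bound is attained in spirit by maximally balanced partitions (e.g.\ when $\ell\mid k$, by partitions whose blocks all have size $k/\ell$), which suggests that it cannot be improved using only Corollary~\ref{thm:topdown} as input.
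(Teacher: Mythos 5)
Your proposal is correct and follows essentially the same route as the paper: the paper's proof likewise notes that $\min_{\pi\in\mathcal{P}_{[k]}^{\ell}}\max_{B\in\pi}|B|=\lceil k/\ell\rceil$ and then applies Corollary~\ref{thm:topdown}. Your pigeonhole justification of that block-size bound and the monotonicity check on $t\mapsto d^{-t/2}$ simply make explicit the steps the paper leaves implicit.
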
 
\begin{proof} Note that the minimum of the maximal block sizes across all level-$\ell$ partitions of $[k]$ is $\displaystyle \min_{\pi \in \mathcal{P}_{[k]}^\ell} \max_{B \in \pi } |B| = \lceil{ k / \ell}\rceil$, and apply Corollary~\ref{thm:topdown}.
\end{proof}

\medskip
The above corollaries bound the amount by which norms can vary over a specific level $\ell$. They imply that the ratios $\norm{\unfold_{\pi}({\tensor{A})}}_\sigma / \norm{\tensor{A}}_\sigma$ and $\norm{\unfold_{\pi}({\tensor{A})}}_\sigma / \norm{\tensor{A}}_F$ fall in the intervals $[1,\ d^{(k-\ell)/2}]$ and $[d^{-(k-\lceil {k/\ell}\rceil)/2},\ 1]$, respectively. Therefore, in the worst case, $\norm{\unfold_{\pi}({\tensor{A})}}_\sigma$ only recovers $\norm{\tensor{A}}_\sigma$ or $\norm{\tensor{A}}_F$ at $\poly(d)$ precision.  
Note that the factor $d^{(k-\ell)/2}$ has an exponent linear in the difference between the orders of the original tensor and its flattening. This means that the potential deviation between their spectral norms depends only on the difference in their orders rather than the actual orders themselves, and that the deviation accumulates in multiplicative fashion with a loss of $\sqrt{d}$ in precision at each level.
In contrast, the factor $d^{-(k-\lceil {k/ \ell}\rceil)/2}$ depends on more than just the gap between $k$ and $\ell$, with a larger impact for unfoldings with orders close to $k$. 

We provide a low-order example that reaches the $\poly(d)$ scaling factor in Corollary~\ref{thm:bottomup}.

\begin{exmp}\label{exam:bound}
Consider the order-4 tensor $\tensor{A}=\matr{I}_d\otimes \matr{I}_d$. Straightforward calculation shows that $\norm{\tensor{A}}_\sigma=1$.
Furthermore, by symmetry, the spectral norm of an unfolding induced by any partition in $\mathcal{P}^2_{[4]}$ or $\mathcal{P}^3_{[4]}$ must fall into one of the following five representative cases:
\begin{itemize}
\item For $\pi_1=\{\{1\},\{2\},\{3,4\}\}\in\mathcal{P}^3_{[4]}$, $\norm{\unfold_{\pi_1}(\tensor{A})}_\sigma=\sqrt{d}$. 
\item For $\pi_2=\{\{1\},\{3\},\{2,4\}\}\in\mathcal{P}^3_{[4]}$, $\norm{\unfold_{\pi_2}(\tensor{A})}_\sigma=1$.
\item For $\pi_3=\{\{1,2\},\{3,4\}\}\in\mathcal{P}^2_{[4]}$, $\norm{\unfold_{\pi_3}(\tensor{A})}_\sigma=d$. 
\item For $\pi_4=\{\{1\},\{2,3,4\}\}\in\mathcal{P}^2_{[4]}$, $\norm{\unfold_{\pi_4}(\tensor{A})}_\sigma=\sqrt{d}$.
\item For $\pi_5=\{\{1,3\},\{2,4\}\}\in\mathcal{P}^2_{[4]}$, $\norm{\unfold_{\pi_5}(\tensor{A})}_\sigma=1$. 
\end{itemize}
Therefore, 
$$
\max_{\pi\in\mathcal{P}_{[4]}^3} \norm{\unfold_\pi (\tensor{A})}_\sigma=d^{(4-3)/2}\norm{\tensor{A}}_\sigma \quad \text{and}\quad \min_{\pi\in\mathcal{P}_{[4]}^3} \norm{\unfold_\pi (\tensor{A})}_\sigma=\norm{\tensor{A}}_\sigma.
$$ 
$$
\max_{\pi\in\mathcal{P}_{[4]}^2} \norm{\unfold_\pi (\tensor{A})}_\sigma=d^{{(4-2)}/2}\norm{\tensor{A}}_\sigma \quad \text{and}\quad \min_{\pi\in\mathcal{P}_{[4]}^2} \norm{\unfold_\pi (\tensor{A})}_\sigma=\norm{\tensor{A}}_\sigma.
$$
\end{exmp}

\bigskip
We conclude this section by generalizing Theorem~\ref{thm:main} to $l^p$-norms. 

\begin{thm}[$l^p$-norm inequalities]\label{thm:main2}
Let $\tensor{A}\in \mathbb{R}^{d_1\times \cdots\times d_k}$ be an arbitrary order-$k$ tensor, and $\pi_1, \pi_2$ any two partitions in $\mathcal{P}_{[k]}$.  Then,
\begin{enumerate}
\item[(a)] For any $1\leq p\leq 2$,
\[
{[\dimA]^{-1/p} \over \left[\dim_{\tensor{A}} (\pi_1, \pi_2)\right]^{-1/2}} \norm{\unfold_{\pi_1}({\tensor{A})}}_p \leq \norm{\unfold_{\pi_2}({\tensor{A})}}_p \leq  {[\dimA]^{1/p} \over \left[\dim_{\tensor{A}} (\pi_2, \pi_1) \right]^{1/2}} \norm{\unfold_{\pi_1}({\tensor{A})}}_p.
\]
\item[(b)] For any $2\leq p\leq \infty$,
\[
{[\dimA]^{\frac{1}{p}-1} \over \left[\dim_{\tensor{A}} (\pi_1, \pi_2)\right]^{-1/2}} \norm{\unfold_{\pi_1}({\tensor{A})}}_p \leq \norm{\unfold_{\pi_2}({\tensor{A})}}_p \leq  {[\dimA]^{1-\frac{1}{p}} \over \left[\dim_{\tensor{A}} (\pi_2, \pi_1) \right]^{1/2}} \norm{\unfold_{\pi_1}({\tensor{A})}}_p.
\]
\end{enumerate}
\end{thm}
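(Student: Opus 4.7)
The plan is to reduce Theorem~\ref{thm:main2} to the spectral case Theorem~\ref{thm:main} by using Proposition~\ref{thm:pnorm} as a bridge between $\norm{\cdot}_p$ and $\norm{\cdot}_\sigma=\norm{\cdot}_2$. The crucial observation is that any unfolding $\unfold_\pi(\tensor{A})$ has the same total dimension as $\tensor{A}$, namely $\dim(\unfold_\pi(\tensor{A}))=\dimA$, so the prefactor appearing in Proposition~\ref{thm:pnorm} depends only on $\dimA$ and not on the particular partition. This is what allows a clean comparison.

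First I would handle case (a), with $1\leq p\leq 2$. Setting $q=2$ in Proposition~\ref{thm:pnorm} and applying it to each of the two unfoldings gives the sandwich
\[
\norm{\unfold_{\pi_i}(\tensor{A})}_p\ \leq\ \norm{\unfold_{\pi_i}(\tensor{A})}_\sigma\ \leq\ \dimA^{\,1/p-1/2}\,\norm{\unfold_{\pi_i}(\tensor{A})}_p,\qquad i\in\{1,2\}.
\]
For the upper bound in (a), I chain: $\norm{\unfold_{\pi_2}(\tensor{A})}_p \leq \norm{\unfold_{\pi_2}(\tensor{A})}_\sigma$, then apply the upper bound of Theorem~\ref{thm:main} to pass to $\norm{\unfold_{\pi_1}(\tensor{A})}_\sigma$, and finally bound the latter by $\dimA^{1/p-1/2}\norm{\unfold_{\pi_1}(\tensor{A})}_p$. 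Multiplying the three factors collapses $\dimA^{1/p-1/2}\cdot\dimA^{1/2}=\dimA^{1/p}$, reproducing $\dimA^{1/p}/[\dim_{\tensor{A}}(\pi_2,\pi_1)]^{1/2}$ exactly. The lower bound in (a) is obtained symmetrically: start from $\norm{\unfold_{\pi_2}(\tensor{A})}_p\geq \dimA^{-(1/p-1/2)}\norm{\unfold_{\pi_2}(\tensor{A})}_\sigma$, then apply the lower bound in Theorem~\ref{thm:main}, then $\norm{\unfold_{\pi_1}(\tensor{A})}_\sigma\geq \norm{\unfold_{\pi_1}(\tensor{A})}_p$. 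The exponents combine into $\dimA^{-1/p}\cdot[\dim_{\tensor{A}}(\pi_1,\pi_2)]^{1/2}$, matching the stated constant.

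For case (b), with $2\leq p\leq \infty$, the Proposition~\ref{thm:pnorm} sandwich runs in the opposite direction: taking $p'=2$ and $q'=p$ in that proposition yields
\[
\norm{\unfold_{\pi_i}(\tensor{A})}_\sigma\ \leq\ \norm{\unfold_{\pi_i}(\tensor{A})}_p\ \leq\ \dimA^{\,1/2-1/p}\,\norm{\unfold_{\pi_i}(\tensor{A})}_\sigma.
\]
The same two chains as before (upper bound $\pi_2\to\pi_1$ via Theorem~\ref{thm:main}, and lower bound $\pi_2\to\pi_1$) yield the stated constants once the factor $\dimA^{1/2-1/p}\cdot\dimA^{1/2}=\dimA^{1-1/p}$ is identified on the upper side and $\dimA^{-(1/2-1/p)}\cdot\dimA^{-1/2}=\dimA^{1/p-1}$ on the lower side.

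I do not expect any significant obstacle here: Theorem~\ref{thm:main} handles all of the partition-lattice combinatorics, and Proposition~\ref{thm:pnorm} contributes only a scalar conversion factor. The only thing to be careful about is the direction flip of the $l^p$-versus-$l^2$ comparison at $p=2$, which is exactly what forces the bifurcation into cases (a) and (b) and the appearance of $1/p$ versus $1-1/p$ in the two statements. A unified way to write this is to note that $\max(1/p,\,1-1/p)$ is the exponent appearing on the ``away'' side of $\norm{\cdot}_2$ in Proposition~\ref{thm:pnorm}, which provides a sanity check that the two cases glue consistently at $p=2$ and recover Theorem~\ref{thm:main}.
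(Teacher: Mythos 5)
Your proposal is correct and follows essentially the same route as the paper: both reduce to Theorem~\ref{thm:main} via Proposition~\ref{thm:pnorm} with $q=2$ (resp.\ $p=2$), using the key observation that the total dimension $\dimA$ is invariant under unfolding so the conversion factor is partition-independent. The paper writes out only case (a) and asserts (b) follows similarly, whereas you track the exponent bookkeeping explicitly in both cases; the content is identical.
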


\begin{proof}\
We only prove (a) since (b) follows similarly. For any given $1\leq p\leq 2$, taking $q=2$ in Proposition~\ref{thm:pnorm} implies that the bound between the $l^p$-norm and spectral norm depends only on the total dimension of the tensor, $\dimA = \prod_{n\in[k]} d_n$. Because the total dimension is invariant under any unfolding operation, we have
\begin{equation}\label{eq:volume}
\dimA^{\frac{1}{2}-\frac{1}{p}} \norm{\unfold_{\pi}(\tensor{A})}_\sigma\leq \norm{\unfold_{\pi}(\tensor{A})}_p \leq \norm{\unfold_{\pi}(\tensor{A})}_\sigma, 
\end{equation}
for all $\pi \in \mathcal{P}_{[k]}$.  Combining~\eqref{eq:volume} with Theorem~\ref{thm:main} gives the desired results. 
 \end{proof}

\section{Orthogonal decomposability  and norm equality on upper cones}
\label{sec:OD}

We have seen that the unfolding operation may change the spectral norm by up to a $\poly(d)$ factor for an arbitrary $\tensor{A}$. This is undesirable in many flattening-based algorithms, such as \cite{de2000multilinear,vasilescu2002multilinear}. However, for some specially-structured 
tensors, the operator norm on the partition lattice may not change much either globally or locally.  We demonstrate such a behavior for the following class of tensors:

\begin{defn}[$\pi$-orthogonal decomposable]\label{def:podeco}
Let $\tensor{A}\in \mathbb{R}^{d_1\times\cdots\times d_k}$ be an order-$k$ tensor and consider any partition $\pi\in\mathcal{P}_{[k]}$. Then $\tensor{A}$ is called \emph{$\pi$-orthogonal decomposable}, or $\pi$-OD, over $\mathbb{R}$ if it admits the decomposition
\begin{equation}
\tensor{A}=\sum_{n=1}^r\lambda_n \matr{a}^{(n)}_1 \otimes \cdots \otimes \matr{a}^{(n)}_k,
\label{eq:godeco}
\end{equation}
where $\lambda_1\geq \lambda_2\geq \cdots \geq \lambda_r\geq 0$, and the set of vectors $\{ \matr{a}^{(n)}_i\in\mathbb{R}^{d_i}\colon i\in[k],\ n\in[r]\}$ satisfies 
\begin{equation}
\langle \otimes_{i\in B} \matr{a}^{(n)}_i,\ \otimes_{i\in B} \matr{a}^{(m)}_i \rangle=\delta_{nm},
\end{equation}
for all $B \in \pi$ and all $n, m \in[r]$.
\end{defn}

\medskip
A concept similar to $\pi$-OD, referred to as \emph{biorthogonal eigentensor decomposition}~\cite{yuan2014tensor}, is introduced in the tensor completion literature when $k=3$ and $\pi=\{\{1\}, \{2,3\}\}$. Informally speaking, $\pi$-OD imposes an orthogonality constraint on every block of singular vectors. 

\begin{rmk}[$\zerok$-OD] \label{def:odeco}
When $\pi = \zerok$ in Definition~\ref{def:podeco}, we obtain the special case of $\zerok$-OD tensors, which admit the decomposition \eqref{eq:godeco} while satisfying 
\begin{equation}\label{eq:godeco2}
\langle \matr{a}^{(n)}_i, \ \matr{a}^{(m)}_i \rangle=\delta_{nm},
\end{equation}
for all $i \in[k]$ and all $n,m\in[r]$.
\end{rmk}

The definition of $\zerok$-OD tensors generalizes the definition of orthogonal decomposable tensors presented in~\cite{robeva2014orthogonal}, as we require neither symmetry nor equality of dimension across modes. In fact, a $\zerok$-OD tensor $\tensor{A}$ is a \emph{diagonalizable} tensor \cite{de2000multilinear}, meaning that the core tensor output from higher-order SVD is \emph{superdiagonal} (i.e.,\ entries are zero unless $i_1=\cdots=i_k$).

\begin{lem} \label{pro:odeco}
Consider an order-$k$ tensor $\tensor{A}\in \mathbb{R}^{d_1\times\cdots\times d_k}$.
\begin{enumerate}
\item[(a)]
Let $\pi_1, \pi_2 \in \mathcal{P}_{[k]}$ and $\pi_1 \leq \pi_2$. If $\tensor{A}$ is $\pi_1$-OD, then $\tensor{A}$ is $\pi_2$-OD.
\item[(b)]
Let $\pi\in\mathcal{P}_{[k]}$ and $\pi\neq \onek$. If $\tensor{A}$ is $\pi$-OD, then 
\begin{equation}\label{eq:odecocon}
\norm{\unfold_{\pi}(\tensor{A})}_\sigma=\lambda_1.
\end{equation}
\end{enumerate}
\end{lem}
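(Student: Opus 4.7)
The plan is to prove (a) directly from Definition~\ref{def:podeco} by exploiting the multiplicativity of rank-$1$ inner products over disjoint groupings of indices, and to prove (b) by showing that $\unfold_\pi(\tensor{A})$ inherits an orthogonally decomposable structure at order $\ell = |\pi|$, then computing its spectral norm via Cauchy--Schwarz.

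For (a), since $\pi_1 \leq \pi_2$, every block $B' \in \pi_2$ decomposes as a disjoint union $B' = B_{j_1} \cup \cdots \cup B_{j_s}$ of blocks in $\pi_1$. The rank-$1$ inner product factorizes across disjoint groupings, so
\[
\langle \otimes_{i\in B'} \matr{a}_i^{(n)},\ \otimes_{i\in B'} \matr{a}_i^{(m)} \rangle = \prod_{t=1}^{s} \langle \otimes_{i\in B_{j_t}} \matr{a}_i^{(n)},\ \otimes_{i\in B_{j_t}} \matr{a}_i^{(m)} \rangle = \prod_{t=1}^{s}\delta_{nm} = \delta_{nm},
\]
which shows that the original decomposition \eqref{eq:godeco} also witnesses $\pi_2$-OD.

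For (b), I would first observe that the unfolding operator is linear and maps each rank-$1$ summand $\otimes_{i=1}^k \matr{a}_i^{(n)}$ to the outer product $\matr{b}_1^{(n)} \otimes \cdots \otimes \matr{b}_\ell^{(n)}$, where $\matr{b}_j^{(n)} := \vec(\otimes_{i \in B_j^\pi} \matr{a}_i^{(n)})$; hence
\[
\unfold_\pi(\tensor{A}) = \sum_{n=1}^{r} \lambda_n\, \matr{b}_1^{(n)} \otimes \cdots \otimes \matr{b}_\ell^{(n)}.
\]
The $\pi$-OD hypothesis translates exactly into the statement that $\{\matr{b}_j^{(n)}\colon n\in[r]\}$ is orthonormal for every $j\in[\ell]$, i.e.\ $\unfold_\pi(\tensor{A})$ is $\bm{0}_{[\ell]}$-OD in the sense of Remark~\ref{def:odeco}. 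Plugging $\matr{y}_j = \matr{b}_j^{(1)}$ into the variational definition \eqref{def:spectral} immediately yields the lower bound $\norm{\unfold_\pi(\tensor{A})}_\sigma \geq \lambda_1$. For the matching upper bound, given arbitrary unit vectors $\matr{y}_1,\dots,\matr{y}_\ell$, set $c_n^{(j)} := \langle \matr{b}_j^{(n)}, \matr{y}_j \rangle$; Bessel's inequality gives $\sum_n (c_n^{(j)})^2 \leq 1$ and $|c_n^{(j)}| \leq 1$ for every $n,j$. Since $\pi \neq \onek$ forces $\ell \geq 2$, I can single out the first two coordinates and estimate
\[
\Big|\sum_{n=1}^r \lambda_n \prod_{j=1}^\ell c_n^{(j)}\Big|
\leq \sum_{n=1}^r \lambda_n\, |c_n^{(1)} c_n^{(2)}|
\leq \sqrt{\sum_{n=1}^r \lambda_n^2 (c_n^{(1)})^2}\,\sqrt{\sum_{n=1}^r (c_n^{(2)})^2}
\leq \lambda_1,
\]
by discarding the factors $|c_n^{(j)}|\leq 1$ for $j\geq 3$, applying Cauchy--Schwarz, and using $\lambda_n \leq \lambda_1$.

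The main obstacle is the upper bound in (b): identifying $\pi \neq \onek$ (equivalently $\ell \geq 2$) as the precise feature enabling the Cauchy--Schwarz step to separate two distinct orthonormal families. When $\pi = \onek$ the unfolding becomes $\vec(\tensor{A})$, whose spectral norm collapses to $\norm{\tensor{A}}_F = \sqrt{\sum_n \lambda_n^2}$, generally exceeding $\lambda_1$; this confirms the hypothesis cannot be dropped and motivates the careful use of $\ell\geq 2$ in the bound.
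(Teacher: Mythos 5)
Your proof is correct. Part (a) is the same argument as the paper's: the inner product of rank-$1$ tensors factorizes over disjoint index groupings, so orthonormality over blocks of $\pi_1$ implies orthonormality over their unions. The lower bound in (b) also coincides with the paper's, namely testing against the rank-$1$ tensor built from the leading factors. Where you genuinely diverge is the upper bound in (b). The paper coarsens $\pi$ to the bipartition $\tau=\{B_1^\pi,(B_1^\pi)^c\}$, uses part (a) to see that $\unfold_\tau(\tensor{A})=\sum_n\lambda_n\matr{x}_n\matr{y}_n^T$ is literally a matrix SVD so that $\norm{\unfold_\tau(\tensor{A})}_\sigma=\lambda_1$, and then invokes the monotonicity of the spectral norm under coarsening (Proposition~\ref{monotonicity}) to conclude $\norm{\unfold_\pi(\tensor{A})}_\sigma\leq\lambda_1$. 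You instead bound the multilinear form directly: with $c_n^{(j)}=\langle\matr{b}_j^{(n)},\matr{y}_j\rangle$ you discard the factors with $j\geq 3$ using $|c_n^{(j)}|\leq 1$, and control the remaining two by Cauchy--Schwarz plus Bessel's inequality applied to the two orthonormal families $\{\matr{b}_1^{(n)}\}_n$ and $\{\matr{b}_2^{(n)}\}_n$; this is valid as written. Both arguments exploit $\pi\neq\onek$ in the same essential way --- the paper needs $\tau$ to be a genuine two-block partition, you need two distinct orthonormal families to feed into Cauchy--Schwarz --- and your closing remark about $\pi=\onek$ matches the counterexample the paper gives after the lemma. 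The trade-off: your route is more self-contained and elementary (it uses neither the monotonicity proposition nor matrix SVD theory), whereas the paper's route recycles machinery already established and makes the reduction to the matrix case, which underlies the whole result, explicit.
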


\begin{proof}\
Part (a): For any two finite sets of vectors $\{\matr{x}_i\}$ and $\{\matr{y}_i\}$ for which $\matr{x}_i, \matr{y}_i \in \mathbb{R}^{d_i},$ we have
\begin{equation}\label{eq:fact}
\langle \otimes_{i\in B}\matr{x}_i,\ \otimes_{j\in B}\matr{y}_j\rangle =\prod_{i\in B} \langle \matr{x}_i,\ \matr{y}_i \rangle, 
\end{equation}
for all $B \subset [k]$.  Suppose $B \in \pi_2$.  If $\pi_1 \leq \pi_2$,  then there exist subsets $C_1, \ldots, C_m \in \pi_1$ such that $C_1 \cup \cdots \cup C_m =  B$.  So,
\[
    \langle \otimes_{i\in B}\matr{x}_i,\ \otimes_{j\in B}\matr{y}_j\rangle = 
    \prod_{i\in B} \langle \matr{x}_i,\ \matr{y}_i \rangle = 
    \prod_{a=1}^m \prod_{i\in C_a} \langle \matr{x}_i,\ \matr{y}_i \rangle = 
    \prod_{a=1}^m \langle \otimes_{i\in C_a}\matr{x}_i,\ \otimes_{j\in C_a}\matr{y}_j\rangle, 
\]
which implies that $\tensor{A}$ is $\pi_2$-OD if $\tensor{A}$ is $\pi_1$-OD.

Part (b):  Suppose $\pi\in\mathcal{P}^\ell_{[k]}$ is of the form $\pi=\{B^\pi_i\colon i\in[\ell]\}$. Note that $\pi\neq \onek$ implies $\ell \geq 2$. Letting $\tau=\{B^\pi_1, (B^\pi_1)^c\}$ where $(B^\pi_1)^c$ denotes the complement of $B^\pi_1$ with respect to $[k]$, we have $\tau \in\mathcal{P}^{2}_{[k]}$ and $\pi\leq \tau$. By Lemma~\ref{pro:odeco}(a), $\tensor{A}$ is $\tau$-OD, so $\tensor{A}$ admits a decomposition of the form
\begin{equation}\label{eq:odeco}
\tensor{A}=\sum_{n=1}^r\lambda_n \matr{a}^{(n)}_1 \otimes \cdots \otimes \matr{a}^{(n)}_k , 
\end{equation}
where
\begin{equation}\label{eq:svd}
\left \langle \otimes_{i \in B_1} \matr{a}^{(n)}_i,\ \otimes_{j\in B_1} \matr{a}^{(m)}_j \right \rangle=\delta_{nm} \quad \text{and} \quad \left \langle \otimes_{i \in (B^\pi_1)^c} \matr{a}^{(n)}_i,\ \otimes_{j\in (B^\pi_1)^c} \matr{a}^{(m)}_j \right \rangle=\delta_{nm}
\end{equation}
for all $n, m\in[r]$.

Now define $\matr{x}_n=\vec(\otimes_{i \in B^\pi_1} \matr{a}^{(n)}_i)$ and $\matr{y}_n=\vec(\otimes_{i \in (B^\pi_1)^c} \matr{a}^{(n)}_i)$ for all $n\in[r]$. By~\eqref{eq:svd}, both $\{\matr{x}_n\}$ and $\{\matr{y}_n\}$ are sets of orthonormal vectors. By the definition of $\unfold_{\tau}(\tensor{A})$, ~\eqref{eq:odeco} implies
\begin{equation}\label{eq:maxlambda1}
\unfold_{\tau}(\tensor{A})=\sum_{n=1}^{r} \lambda_n \matr{x}_n \matr{y}^T_n,
\end{equation}
which is simply the matrix SVD of $\unfold_{\tau}(\tensor{A})$. Hence $\norm{\unfold_\tau(\tensor{A})}_\sigma=\lambda_1$. Using monotonicity (c.f.,\ Proposition~\ref{monotonicity}), we have 
\begin{equation}\label{eq:less}
\norm{\unfold_\pi(\tensor{A})}_\sigma \leq \norm{\unfold_\tau(\tensor{A})}_\sigma =\lambda_1.
\end{equation}
Conversely, by the definition of the spectral norm, we have
\begin{align}\label{eq:maxlambda2}
\norm{\unfold_{\pi}(\tensor{A})}_\sigma &\geq  \left \langle \unfold_{\pi}(\tensor{A}),\ \vec\left(\otimes_{i\in B_1^\pi}\matr{a}^{(1)}_i\right)\otimes\cdots \otimes 
\vec\left(\otimes_{i\in B_\ell^\pi}\matr{a}^{(1)}_i\right) \right \rangle \notag \\
&=\sum_{n=1}^r \lambda_n \Big \langle \vec\left(\otimes_{i\in B^\pi_1}\matr{a}^{(n)}_i\right)\otimes\cdots \otimes 
\vec\left(\otimes_{i\in B^\pi_\ell}\matr{a}^{(n)}_i\right), \notag \\
&\quad  \quad \quad \quad \quad \vec\left(\otimes_{i\in B^\pi_1}\matr{a}^{(1)}_i\right)\otimes\cdots \otimes 
\vec\left(\otimes_{i\in B^\pi_\ell}\matr{a}^{(1)}_i\right) \Big \rangle \notag \\
&=\sum_{n=1}^r \lambda_n \prod_{j\in[\ell]}\left \langle \vec\left(\otimes_{i \in B^\pi_j}\matr{a}^{(n)}_i\right),\ \vec\left(\otimes_{i \in B^\pi_j}\matr{a}_i^{(1)}\right)\right\rangle \notag \\
&=\sum_{n=1}^r \lambda_n \delta_{n,1}  =\lambda_1
\end{align}
where the third line comes from~\eqref{eq:fact} and the last line follows from the fact that $\tensor{A}$ is $\pi$-OD. Combining~\eqref{eq:less} and~\eqref{eq:maxlambda2}, we conclude $\norm{\unfold_{\pi}(\tensor{A})}_\sigma=\lambda_1$.
\end{proof}

\begin{rmk}
The condition $\pi\neq \onek$ in Lemma~\ref{pro:odeco} is needed for \eqref{eq:odecocon} to hold. In fact, consider a $2\times 2$ matrix $\matr{A}=2\matr{e}_1\otimes \matr{e}_1+\matr{e}_1\otimes \matr{e}_2$, where $\{ \matr{e}_i, i\in[2]\}$ is the canonical basis of $\mathbb{R}^2$. Then, $\matr{A}$ is $\onek$-OD with $\lambda_1=2$, but $\norm{\unfold_{\onek}(\tensor{A})}_\sigma=\norm{\tensor{A}}_F=\sqrt{5}\neq 2$.
\end{rmk}

\begin{thm}[Norm equality on upper cones] \label{thm:envelope}
If $\tensor{A}$ is $\pi$-OD, then for any partition in the \emph{upper cone} ${U}_{\pi}=\{\tau\in\mathcal{P}_{[k]}\colon \pi\leq \tau<\onek \}$ of $\pi$, we have
$$
\norm{\unfold_{\tau}(\tensor{A})}_\sigma=\norm{\unfold_\pi(\tensor{A})}_\sigma.
$$
\end{thm}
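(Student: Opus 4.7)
The plan is to observe that the theorem follows almost immediately from the two parts of Lemma~\ref{pro:odeco} combined. Given any $\tau$ in the upper cone $U_\pi$, we have $\pi \leq \tau < \onek$ by definition, and we want to show the spectral norm of $\unfold_\tau(\tensor{A})$ equals that of $\unfold_\pi(\tensor{A})$.

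First, I would use Lemma~\ref{pro:odeco}(a), the monotonicity of $\pi$-orthogonal decomposability along the partition lattice, to conclude that since $\tensor{A}$ is $\pi$-OD and $\pi \leq \tau$, the tensor $\tensor{A}$ is also $\tau$-OD. Crucially, the decomposition in \eqref{eq:godeco} yielding the coefficients $\lambda_1 \geq \cdots \geq \lambda_r \geq 0$ is the same for both $\pi$-OD and $\tau$-OD status (the argument in part (a) shows the orthogonality conditions for coarser blocks follow from those for finer blocks, without altering the $\lambda_n$'s). Thus both partitions are witnessed by the same leading coefficient $\lambda_1$.

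Next, I would apply Lemma~\ref{pro:odeco}(b) twice. Since $\pi \leq \tau < \onek$ forces $\pi \neq \onek$ as well, both $\pi$ and $\tau$ are admissible choices of partition in the hypothesis of part (b). This yields $\norm{\unfold_\pi(\tensor{A})}_\sigma = \lambda_1 = \norm{\unfold_\tau(\tensor{A})}_\sigma$, which is exactly the claim. There is no substantive obstacle here: the content of the theorem has already been absorbed into the preceding lemma, and the role of this theorem is to package it as a statement about the entire upper cone $U_\pi$. The only point worth double-checking is the exclusion of $\onek$ from $U_\pi$, which is precisely what allows the invocation of Lemma~\ref{pro:odeco}(b) for $\tau$ and matches the remark following the lemma showing that $\pi = \onek$ can fail \eqref{eq:odecocon}.
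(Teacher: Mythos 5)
Your proposal is correct and follows essentially the same route as the paper: apply Lemma~\ref{pro:odeco}(a) to transfer $\pi$-OD to $\tau$-OD with the same decomposition, then invoke Lemma~\ref{pro:odeco}(b) for both $\pi$ and $\tau$ (both admissible since $\tau<\onek$ forces $\pi\neq\onek$) to conclude both spectral norms equal $\lambda_1$. Your explicit observation that the coefficients $\lambda_n$ are unchanged in passing from $\pi$-OD to $\tau$-OD is a worthwhile point of care that the paper leaves implicit.
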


\begin{proof}
If $\tensor{A}$ is $\pi$-OD, then by Lemma~\ref{pro:odeco}(b), we have $\norm{\unfold_{\pi}(\tensor{A})}_\sigma=\lambda_1$. Given any $\tau\geq \pi$, by Lemma~\ref{pro:odeco}(a), $\tensor{A}$ is also $\tau$-OD. Again, using Lemma~\ref{pro:odeco}(b), we have $\norm{\unfold_{\tau}(\tensor{A})}_\sigma=\lambda_1$. Therefore, $\norm{\unfold_{\tau}(\tensor{A})}_\sigma=\norm{\unfold_\pi(\tensor{A})}_\sigma$.
\end{proof}

Theorem~\ref{thm:envelope} states that the spectral norm is invariant for $\pi$-OD $\tensor{A}$ under any unfolding induced by the partitions in the upper cone ${U}_{\pi}$ of $\pi$. 
This lies in contrast with the $\poly(d)$ factor we have seen for unstructured tensors.

\medskip
\begin{cor} \label{cor:global}
If $\tensor{A}$ is $\zerok$-OD, then for all partitions $\pi\neq \onek$, we have
\[
\norm{\unfold_\pi (\tensor{A})}_\sigma=\norm{\tensor{A}}_\sigma.
\]
\end{cor}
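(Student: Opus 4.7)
The plan is to observe that this corollary is an immediate specialization of Theorem~\ref{thm:envelope} to the bottom element of the partition lattice. Specifically, I would take $\pi = \zerok$ in Theorem~\ref{thm:envelope}, relying on the fact that $\zerok$ is the minimum element of $\mathcal{P}_{[k]}$, so every partition $\tau \in \mathcal{P}_{[k]}$ satisfies $\zerok \leq \tau$.

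The concrete steps are as follows. First, recall from the remark following Definition~\ref{def:unfold} that $\unfold_{\zerok}(\tensor{A}) = \tensor{A}$, hence $\norm{\unfold_{\zerok}(\tensor{A})}_\sigma = \norm{\tensor{A}}_\sigma$. Next, note that by Definition~\ref{def:lattice}, for any $\pi \in \mathcal{P}_{[k]}$ with $\pi \neq \onek$ we have $\zerok \leq \pi < \onek$, so $\pi \in U_{\zerok} = \{\tau \in \mathcal{P}_{[k]} : \zerok \leq \tau < \onek\}$. Finally, apply Theorem~\ref{thm:envelope} with the base partition $\zerok$: since $\tensor{A}$ is assumed to be $\zerok$-OD, for every $\pi \in U_{\zerok}$ we obtain $\norm{\unfold_\pi(\tensor{A})}_\sigma = \norm{\unfold_{\zerok}(\tensor{A})}_\sigma = \norm{\tensor{A}}_\sigma$.

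There is no real obstacle here, as the heavy lifting was already done in Lemma~\ref{pro:odeco} (which identifies the spectral norm with $\lambda_1$ for $\pi$-OD tensors under any non-trivial $\pi$) and Theorem~\ref{thm:envelope} (which propagates this to the entire upper cone). The only point worth being careful about is excluding $\pi = \onek$, which is necessary precisely because Lemma~\ref{pro:odeco}(b) fails at $\onek$ (the full vectorization has spectral norm equal to the Frobenius norm, which is $\sqrt{\sum_n \lambda_n^2}$ rather than $\lambda_1$). This exclusion is already built into the hypothesis of the corollary, so the argument goes through cleanly.
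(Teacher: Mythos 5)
Your proposal is correct and matches the paper's intent exactly: the paper states Corollary~\ref{cor:global} as an immediate consequence of Theorem~\ref{thm:envelope} with $\pi=\zerok$, using that $U_{\zerok}=\mathcal{P}_{[k]}\setminus\{\onek\}$ and $\unfold_{\zerok}(\tensor{A})=\tensor{A}$. Your added remark on why $\onek$ must be excluded is also consistent with the paper's discussion following Lemma~\ref{pro:odeco}.
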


Corollary~\ref{cor:global} implies that for $\zerok$-OD tensors, the operator norm is invariant under any unfolding operations except vectorization.  Lastly, $\pi_1, \pi_2 \in {U}_{\pi_1\wedge\pi_2}$ implies the following corollary:

\begin{cor}
Let $\pi_1, \pi_2\in\mathcal{P}_{[k]}$. If $\tensor{A}$ is $(\pi_1\wedge \pi_2)$-OD, then
\[
\norm{\unfold_{\pi_1}(\tensor{A})}_\sigma=\norm{\unfold_{\pi_2}(\tensor{A})}_\sigma.
\]
\end{cor}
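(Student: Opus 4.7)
The plan is to reduce to Theorem~\ref{thm:envelope} by routing both $\pi_1$ and $\pi_2$ through their common refinement $\pi_1\wedge\pi_2$. First I would record the basic order-theoretic fact that, by the definition of the greatest lower bound on the partition lattice, $\pi_1\wedge\pi_2\leq \pi_1$ and $\pi_1\wedge\pi_2\leq \pi_2$, so (assuming $\pi_1,\pi_2\neq\onek$, as indicated by the authors' remark that $\pi_1,\pi_2\in U_{\pi_1\wedge\pi_2}$) both $\pi_1$ and $\pi_2$ lie in the upper cone $U_{\pi_1\wedge\pi_2}=\{\tau\in\mathcal{P}_{[k]}\colon \pi_1\wedge\pi_2\leq\tau<\onek\}$.

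Next, since $\tensor{A}$ is $(\pi_1\wedge\pi_2)$-OD by hypothesis, Theorem~\ref{thm:envelope} applies to each of $\pi_1$ and $\pi_2$ separately, yielding
\[
\norm{\unfold_{\pi_1}(\tensor{A})}_\sigma=\norm{\unfold_{\pi_1\wedge\pi_2}(\tensor{A})}_\sigma=\norm{\unfold_{\pi_2}(\tensor{A})}_\sigma.
\]
Transitivity of equality then gives the claimed identity, so the proof is essentially a one-line application of the previous theorem.

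There is no real obstacle here; the work has already been done in establishing Lemma~\ref{pro:odeco} and Theorem~\ref{thm:envelope}. The only point requiring a moment's care is the exclusion of $\onek$: if either $\pi_i=\onek$ then $\unfold_{\pi_i}(\tensor{A})=\vec(\tensor{A})$ has spectral norm equal to $\norm{\tensor{A}}_F$, which need not coincide with $\lambda_1$, as the remark following Lemma~\ref{pro:odeco} illustrates. Under the implicit convention $\pi_1,\pi_2\in U_{\pi_1\wedge\pi_2}$ this case is excluded, and the argument goes through without incident.
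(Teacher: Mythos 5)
Your proof is correct and is exactly the paper's argument: the authors derive the corollary in one line from the observation that $\pi_1,\pi_2\in U_{\pi_1\wedge\pi_2}$ together with Theorem~\ref{thm:envelope}. Your added remark that this implicitly requires $\pi_1,\pi_2\neq\onek$ is a fair and worthwhile clarification of a point the paper leaves tacit.
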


\section{Discussion}
\label{sec:discussion}

In this paper, we presented a new framework representing all possible tensor unfoldings by the partition lattice and established a set of general inequalities quantifying the impact of tensor unfoldings on the operator norms of the resulting tensors. We showed that the comparison bounds scale polynomially in the dimensions $\{d_n\}$ of the tensor, with powers depending on the corresponding partition and block sizes for any pair of tensor unfoldings being compared.
As a direct consequence, we demonstrated how the operator norm of a general tensor is lower and upper bounded by that of its unfoldings.

In general, an unfolding operation may inflate the operator norm by up to a $\poly(d)$ factor, as seen in Corollary~\ref{thm:bottomup}.  Note that the quantity $\dimA$ plays a key role in the worst-case inflation factor and is a manifestation of the curse of dimensionality. Specifically, $\dimA$ can be quite large as the mode dimensions and tensor order increase, with particular sensitivity to the latter.  In such settings, our main result seems to bode poorly for flattening-based algorithms; however, we believe that it should be interpreted with caution because our comparison bounds deal with arbitrary tensors rather than those often sought in applications. In fact, $\pi$-OD tensors permit much tighter bounds in which some unfoldings, including certain matricizations, leave the operator norm relatively unaffected. In practice, $\pi$-OD tensors, or those within a small neighborhood around $\pi$-OD tensors, arise widely in statistical and machine learning applications~\cite{anandkumar2014tensor,kuleshov2015tensor,nickel2011three,wang2016orthogonal}.

Additionally, our work enables us to compare different unfoldings at the same level $\ell$.  Recent work on problems featuring nuclear-norm regularization has shown that not all $n$-mode flattenings are equally preferable~\cite{liu2013tensor}. Indeed, as illustrated in Example~\ref{exam:bound}, the operator norm of level-2 unfoldings (i.e.\ matricizations) can be quite different. Recently, several algorithms have been proposed to account for this behavior. For example, Tomioka \emph{et al.}  \cite{tomioka2010estimation} consider a weighted sum of the norms of all single-mode matricizations. Other techniques include \emph{two-mode matricization}~\cite{wang2016orthogonal}  and \emph{square matricization}~\cite{mu2013square, richard2014statistical}, in which the original tensor is reshaped into a matrix by flattening along multiple modes. Our work provides general bounds to evaluate the effectiveness of such schemes. In particular, the results presented here are used in the theoretical analysis of a two-mode higher-order SVD algorithm proposed recently~\cite{wang2016orthogonal}. 
 
We have not attempted to characterize the degree to which operator norm relations on the partition lattice restrict the original tensor. Essentially, this is a converse problem asking whether $\pi$-OD is a necessary condition for Theorem~\ref{thm:envelope} and Corollary~\ref{cor:global} in addition to being sufficient. If not, it would be useful to determine the extent to which such equalities inform us about the intrinsic structure of the original tensor. From a practical standpoint, norm comparisons between different matricizations are relatively simple, but the optimal manner in which to use this information to learn about the original tensor remains unknown.  

In closing, we emphasize that while this work focuses on theory rather than computational tractability, it possesses practical implications as well. Because direct calculation of the operator norm of a level-$\ell$ tensor is generally computationally prohibitive for $\ell\geq 3$, exploiting level-2 unfoldings may be attractive when the unfolding effect is small enough. Alternatively, for more precise calculations, a number of approximation algorithms exist for higher-order tensor problems~\cite{tomioka2010estimation, yu2014approximate} at the cost of increased computation. Given that the trade-off between accuracy and computation is often unavoidable, our work may be of help in finding an appropriate application-specific balance when working with higher-order tensors.

\section*{Acknowledgements}
This research is supported in part by a Math+X Research Grant from the Simons Foundation,
a Packard Fellowship for Science and Engineering, and an NIH training grant T32-HG000047.

\bibliography{unfoldingDraft}

\end{document}